\renewcommand{\(}{\left(}
\renewcommand{\)}{\right)}
\renewcommand{\[}{\left[}
\renewcommand{\]}{\right]}
\renewcommand{\c}{\mathbf{c}}
\renewcommand{\b}{\mathbf{b}}
\renewcommand{\j}{\mathbf{j}}
\newcommand{\s}{\mathbf{s}}
\renewcommand{\k}{\mathbf{k}}
\newcommand{\W}{\mathbf{W}}
\newcommand{\D}{\mathbf{D}}
\newcommand{\x}{\mathbf{x}}
\renewcommand{\t}{\mathbf{t}}
\newcommand{\A}{\mathbf{A}}
\newcommand{\T}{\mathbf{T}}
\newcommand{\N}{\mathbb{N}}
\renewcommand{\u}{\mathbf{u}}
\renewcommand{\v}{\mathbf{v}}
\newcommand{\X}{\mathbf{X}}
\renewcommand{\b}{\mathbf{b}}
\renewcommand{\i}{\mathbf{i}}
\newcommand{\Tr}[1]{{\rm{Tr}}\left(#1\right)}
\newcommand{\End}[1]{{\rm{End}}}
\renewcommand{\log}[1]{{\rm{log}}#1}
\newcommand{\suppp}[1]{{\rm{supp}}\(#1\)}
\newcommand{\var}[1]{{\rm{var}}\(#1\)}
\newcommand{\mmod}{\,{\rm{mod}}\;}
\newtheorem{lemma}{Lemma}
\newtheorem{definition}{Definition}
\newtheorem{prop}{Proposition}
\newtheorem{corollary}{Corollary}
\newtheorem{rem}{Remark}
\newcommand{\norm}[1]{\left\lVert#1\right\rVert}
\newcommand{\mypm}{\mathbin{\smash{%
\raisebox{0.35ex}{%
            $\underset{\raisebox{0.5ex}{$\smash -$}}{\smash+}$%
            }%
        }%
    }%
}
\begin{document}
\title{Symmetric Pseudo-Random Matrices}

\IEEEspecialpapernotice{\hfill\textit{Dedicated to the memory of Solomon W. Golomb (1932-2016)}}

\author{Ilya Soloveychik, Yu Xiang and Vahid Tarokh \\ John A. Paulson School of Engineering and Applied Sciences, \\ Harvard University
\thanks{This work was supported by the Fulbright Foundation and Army Research Office grant No. W911NF-15-1-0479.}
}
\maketitle

\begin{abstract}
We consider the problem of generating symmetric pseudo-random sign ($\mypm 1$) matrices based on the similarity of their spectra to Wigner's semicircular law. Using binary $m$-sequences (Golomb sequences) of lengths $n=2^m-1$, we give a simple explicit construction of circulant $n \times n$ sign matrices and show that their spectra converge to the semicircular law when $n$ grows. The Kolmogorov complexity of the proposed matrices equals to that of Golomb sequences and is at most $2\log_2(n)$ bits.
\end{abstract}

\begin{IEEEkeywords}
Pseudo-random matrices, semicircular law, Wigner ensemble.
\end{IEEEkeywords}

\section{Introduction}
\subsection{Wigner Matrices: Universality and Structure}
Random matrices have been a very active area of research for the last few decades and have found enormous applications in various areas of modern mathematics, physics, engineering, biological modeling, and other fields \cite{akemann2011oxford}. In this article, we focus on the square symmetric matrices with $\mypm 1$ entries, referred to as square symmetric {\it sign} matrices. For this model, Wigner \cite{wigner1955characteristic} demonstrated that if the elements of the upper triangular part (including the main diagonal) of an $n \times n$ matrix are independent Rademacher ($\mypm 1$ with equal probabilities) random variables, then as $n$ grows a properly scaled empirical spectral measure converges to the semi-circle law. Originally, Wigner proved convergence in expectation, but 3 years later he himself improved the result to convergence in probability \cite{wigner1958distribution}. In about a decade, Arnold \cite{arnold1967asymptotic} strengthened the claim to almost sure weak convergence. 

After the pioneering works of Wigner and the inception of the Random Matrix Theory (RMT), a great deal of research effort was put into generalization of the original basic random matrix setup. The two main directions of generalization can be roughly defined as \textit{universality} and \textit{structure}. 

The universality phenomenon can be viewed as a non-commutative analog of the Central Limit Theorem \cite{voiculescu1992free}, in other words, it can be understood as invariance of some limiting properties of the matrices under a change of the marginal (atomic) distributions of the matrix entries, given that the independence conditions are left intact. In the last 20 years, much work have been devoted to the analysis of the universality phenomena with regard to different matrix properties and of their exact bounds. Remarkably, when it comes to the semicircular limiting spectrum, necessary conditions on the marginal distributions were quite well studied (see \cite{soshnikov1999universality} and surveys \cite{erdHos2011universality, tao2012random}).

In the second direction of the generalization endeavor, which we call structure or dependence, scientists try to understand to what extent the tough independence conditions on the matrix entries can be relaxed without affecting the limiting behavior. The importance of this line of research is hard to overestimate, as already in the early applications of RMT, the validity of the independence condition was questioned by a number of works coming mainly from physics and related fields \cite{french1970validity, guhr1998random}. Unlike universality, very little is known about how much structure can one allow and still obtain an ensemble with the semicircular limiting law. Some recent works \cite{gotze2012semicircle, hofmann2008wigner, schenker2005semicircle} allow moderate amount of dependence between matrix elements. Usually, the level of permitted structure is limited by the specific tools used for the analysis and to the best of our knowledge no unified examination of the information-theoretic bounds of the possible dependencies has been performed so far. Moreover, due to the vagueness and complexity of determining the dependencies between the entries and their quantitative assessment, there does not seem to exist literature specifically concentrating on this issue and rigorously introducing the system of reference for such study. 

In this work, we will make an attempt towards a better understanding of the amount of structure that can be tolerated by the semicircular law. Below we resort to the Kolmogorov complexity paradigm to quantify the dependencies between the matrix elements for sign matrices. We give an explicit construction of matrices with substantial amount of structure whose spectra still converge to the semicircular law. Using general results from the theory of algorithmic complexity, we justify that the measure of structure existing in our construction is close to the maximal possible thus showing that indeed a large amount of dependence between the entries can be introduced without affecting the limiting law.

\subsection{Pseudo-random Sequences}
From a different perspective, in many engineering applications one needs to simulate random matrices. The most natural way to generate an instance of a random $n \times n$ sign matrix is to toss a fair coin $\frac{n(n+1)}{2}$ times, fill the upper triangular part of a matrix with the outcomes and reflect the upper triangular part into the lower. Unfortunately, for large $n$ such an approach would require a powerful source of randomness due to the independence condition \cite{gentle2013random}. In addition, when the data is generated by a truly random source, atypical  \textit{non-random looking} outcomes have non-zero probability of showing up. Yet another issue is that any experiment involving tossing a coin would be impossible to reproduce exactly. All these reasons stimulated researchers and engineers from different areas to seek for approaches of generating \textit{random-looking} data usually referred to as \textit{pseudo-random} sources or sequences of binary digits \cite{zepernick2013pseudo, golomb1967shift}. A wide spectrum of pseudo-random number generating algorithms have found applications in a large variety of fields including radar, digital signal processing, CDMA, coding theory, cryptographic systems, Monte Carlo simulations, navigation systems, scrambling, etc. \cite{zepernick2013pseudo}.

The term \textit{pseudo-random} is used to emphasize that the binary data at hand is indeed generated by an entirely deterministic causal process with low algorithmic complexity, but its statistical properties resemble some of the properties of data generated by tossing a fair coin. Remarkably, most efforts were focused on one dimensional pseudo-random sequences \cite{zepernick2013pseudo, golomb1967shift} due to their natural applications and to the relative simplicity of their analytical treatment. One of the most popular methods of generating pseudo-random sequences is due to Golomb \cite{golomb1967shift} and is based on linear-feedback shift registers capable of generating pseudo-random sequences of very low algorithmic complexity. The study of pseudo-random arrays and matrices was launched around the same time \cite{reed1962note, macwilliams1976pseudo, imai1977theory, sakata1981determining}. Among the known two dimensional pseudo-random constructions the most popular are the so-called perfect maps \cite{reed1962note, paterson1994perfect, etzion1988constructions}, and the two dimensional cyclic codes \cite{imai1977theory, sakata1981determining}. However, except for our recent article \cite{soloveychik2017pseudo} discussed below, to the best of our knowledge none of the previous works considered constructions of symmetric matrices using their spectral properties as the defining statistical features.

\subsection{The Kolmogorov Complexity}
There exist various approaches to quantify the algorithmic power needed to generate an individual piece of binary data, also known as algorithmic complexity \cite{grunwald2004shannon, li2009introduction, downey2010algorithmic}. It can be intuitively thought of as a measure of the amount of randomness stored in that piece of data. Probably the most popular among computer theorists measure of algorithmic complexity is the so-called Kolmogorov (Kolmogorov-Chaitin) complexity \cite{solomonoff1964formal1, kolmogorov1965three} defined as follows. Let $D$ be a string of binary data of length $n$, then its Kolmogorov complexity is the length of the shortest binary Turing machine code that can produce $D$ and halt. If $D$ has no computable regularity it cannot be encoded by a program shorter than its original length $n$ (here and below the Kolmogorov complexity is given up to an additive constant), meaning that its consecutive bits are unpredictable given the preceding ones, and it may be considered as truly random \cite{knuth1998art, li2009introduction}. A string with a regular pattern, on the other hand, can be computed by a program much shorter than the string itself, thus having a much smaller Kolmogorov complexity. By convention, a comparison of Kolmogorov complexities of various strings of the same length is usually done by conditioning on the length and thus assuming the length to be already known to the machine without specifying it as an input \cite{cover2012elements}. For example, the conditional Kolmogorov complexity of a Golomb sequence of length $n$ is at most $2\log_2 n$, which is relatively small, since using a simple combinatorial argument one can show that at most $\frac{n}{2^n}$ fraction of the strings of length $n$ have conditional Kolmogorov complexity less than $\log_2 n$.

We would like to emphasize that the definition of the Kolmogorov complexity does not require the data generating algorithm to be presented explicitly, and therefore is sometimes considered to be not very informative. There exist finer measures of algorithmic complexity taking into account the level of explicitness of the algorithm and/or its run time \cite{hoory2006expander}. The algorithm that we present in this paper is in fact ``very'' explicit even according to demanding definitions of complexity or explicitness. However, to avoid deep digression into the subtleties of different definitions of algorithmic complexity due to lack of space, and in order not to deviate much from the main topic of the article, in this work we focus on Kolmogorov complexity.


\subsection{Spectral Pseudo-randomness}
Most of the literature dealing with specific pseudo-random constructions start from a list of concrete properties mimicking truly random data and try to come up with a deterministic way of reproducing those properties. We would like to adapt the same paradigm and apply it to the property of random matrices to have asymptotically semicircular spectral distribution. It is important to emphasize that unlike the works listed above, we cannot require any single matrix to have semicircular spectral density, since the spectrum of a matrix is a step function and cannot be smooth. Therefore, we need to adjust the framework to our case and allow matrices to only approximately match the desired property, while requiring exact matching in the limit. This forces us to deal with sequences of matrices instead of single realizations. We concentrate on the problem of deterministically constructing sequences of symmetric sign matrices whose spectra converge to the semicircular law. A naturally arising question here may be formulated as: ``What is the minimal Kolmogorov complexity of matrices in a sequence of sign matrices, whose spectra converge to the semicircular law?''. Yet another important question that have been challenging mathematical and engineering society for the last decades is the \emph{inverse spectral problem} which can be formulated as: ``What can be said about a large sign matrix (an adjacency matrix of a graph lifted by the mapping $0 \rightarrow 1$ and $1 \rightarrow -1$) if its spectrum is known/is close to a semicircle?''. Based on the experience with Wigner's matrices and their spectral properties scientists tend to believe that if the spectrum of a matrix is close to the semicircle, it will necessary \emph{look random} and will also possess no observable structure. Unfortunately, there does not exist much literature on this problem due to its complexity. In this article, we try to shed some light on the aforementioned questions.

Using Golomb sequences (also known as binary $m$-sequences) of lengths $n=2^m-1$, we explicitly construct $n \times n$ symmetric circulant sign matrices of Kolmogorov complexity as low as $2\log_2 n$, whose spectra converge to the semicircular law with $n \to \infty$. The proof given below follows the classical method of moments and demonstrates that the empirical moments of a properly designed ensemble of our pseudo-random matrices converge almost surely to the correct limiting values, which implies almost surely weak convergences of the empirical distributions. This surprising result has at least three major consequences. First, it means that the real amount of randomness conveyed by the semicircular law is quite low. Second, it provides the first deterministic construction of such matrices, which may significantly affect many applications where random matrices are generated using more powerful sources of randomness. Finally, it partially answers the \emph{inverse spectral problem} by building a sequence of circulant structured matrices with spectra converging to Wigner's law, which contradicts the common belief that matrices with semicircular spectrum must be \emph{random looking}.

The proposed construction can be viewed as a pseudo-random analog of the truly random circulant model with independent Rademacher entries. The empirical spectra of the truly random circulant matrices converge almost surely to the normal law \cite{bose2002limiting} having unbounded support and, thus, being significantly different from the limiting distribution in our case. This means in particular that the intrinsic structure of the Golomb sequences combined with the circulant pattern surprisingly boils down into the semicircular law. This phenomenon requires further investigation and may contribute to the study of the \emph{inverse spectral problem}. The only other known pseudo-random symmetric sign matrices with spectra converging to the semicircular law are the elements of the pseudo-Wigner ensemble built from the dual BCH codes \cite{soloveychik2017pseudo}. Compared to that paper, our present model has a completely deterministic and easier construction algorithm and lower algorithmic complexity. In addition, in the present work convergence of the empirical spectra to the limiting law is shown to be almost sure, while in the setup of \cite{soloveychik2017pseudo} only convergence in probability can be guaranteed.

It can be easily observed that the second powers of our pseudo-random matrices provide a deterministic construction of matrices whose spectra converge to the Marchenko-Pastur law with the aspect ratio $\gamma=1$ \cite{marchenko1967distribution}. The Marchenko-Pastur distribution naturally arises as the limiting spectral density of high-dimensional sample covariance matrices with independent samples under some assumptions on the population distribution \cite{marchenko1967distribution}. The algorithmic complexity of the squared matrices is the same as of the original ones up to a constant term, thus yeilding an efficient pseudo-random construction of matrices with the limiting Marchenko-Pastur spectrum.

\subsection{Pseudo-random Graphs}
It is also instructive to relate our pseudo-random matrices to the numerous $\textit{random-looking}$ graphs mostly considered in combinatorics and the theory of computer science \cite{krivelevich2006pseudo, alon2004probabilistic}. Binary symmetric $n \times n$ matrices naturally correspond to unweighted, undirected graphs on $n$ vertices, since each matrix element $a_{ij}$ may be viewed as indicating the presence of the edge $(i,j)$. Graphs mimicking properties of truly random graphs are mainly utilized for the purposes of derandomization, see \cite{krivelevich2006pseudo, hoory2006expander, alon2004probabilistic} and references therein. Popular classes of $\textit{random-looking}$ graphs include pseudo-random (jumbled) \cite{thomason1987pseudo}, quasi-random \cite{chung1989quasi}, expander graphs \cite{hoory2006expander} and others. Remarkably, construction of some of them can be very involved (e.g. expanders), limiting their usage in high dimensions. In addition, in most pseudo-random graph constructions only results regarding their top eigenvalues are usually reported, leaving aside investigation of the bulk of their spectra.

The rest of the text is organized as follows. First we introduce notation in Section \ref{sec:not}. Section \ref{sec:gol} is devoted to the brief overview of Golomb sequences and linear-feedback shift registers. Then we define the notion of pseudo-random matrices constructed from Golomb sequences in Section \ref{sec:def} and formulate the main results of the article regarding the convergence of their empirical spectra to the semicircular distribution in Section \ref{sec:main}. In Section \ref{sec:kolm}, we discuss the Kolmogorov complexity of the proposed matrices. The numerical simulations and comparisons of our model with other related ensembles are given in Section \ref{sec:num_res}. All the auxiliary claims and the proofs of the main results are presented in Appendices.

\section{Notation}
\label{sec:not}
Let $\mathcal{C}$ be a $[n,k,d]$ binary linear code of length $n$, dimension $k$ and minimum Hamming distance $d$. The dual code $\mathcal{C}^\perp$ of $\mathcal{C}$ is a linear code of the same length and dimension $k^\perp = n-k$, whose codewords are orthogonal to all the codewords of $\mathcal{C}$, where we say that two words $\u = \{u_i\},\; \v = \{v_i\} \in GF(2)^n$ are orthogonal if $\sum_i v_iu_i = 0\; \mmod 2$. Introduce a family of functions
\begin{equation}
\label{eq:zeta_def}
\begin{array}{llcl}
\hspace{-0.25 cm} \zeta_n : & GF(2)^{n \times n} & \to & \{-1,1\}^{n \times n}, \\
& \{u_{ij}\}_{i,j=0}^{n-1} & \mapsto & \{(-1)^{u_{ij}}\}_{i,j=0}^{n-1},
\end{array}
\end{equation}
mapping binary $0/1$ matrices into sign matrices of the same sizes. Below we suppress the subscript and write $\zeta$ for simplicity.

We denote by $S_n$ the set of all symmetric $n \times n$ matrices with entries $\mypm \frac{1}{2\sqrt{n}}$. The Wigner ensemble $\mathcal{W}_n$ is defined as the set $S_n$ endowed with the uniform probability measure. Ranges of non-negative integers are denoted by $[n] = \{0,\dots,n-1\}$. Note also that the integer indexation of matrix elements starts with $0$.

We use the following standard notation for the limiting relations between functions. We write $f(n)=o(g(n))$ if $\lim_{n\to \infty} \frac{f(n)}{g(n)} = 0$ and $f(n)=O(g(n))$ if $|f(n)| \leqslant C |g(n)|$ for some constant $C$ and $n$ big enough.

\section{Golomb and Binary $m$-Sequences}
\label{sec:gol}
\subsection{Golomb Sequences}
\label{sec:gol_a}
Let $f(x)$ be a binary primitive polynomial of degree $m$ and let $\mathcal{C}$ be a cyclic code of length $n=2^m-1$ with the generating polynomial
\begin{equation}
h(x) = \frac{x^n-1}{f(x)}.
\end{equation}
In other words,
\begin{equation}
\mathcal{C} = \{\c \in GF(2)^n\mid h(x)|\c(x)\},
\end{equation}
where
\begin{equation}
\c(x) = \sum_{i=0}^{n-1}c_ix^i.
\end{equation}
When $f(x)$ is primitive, as in our case, a code constructed in such a way is usually referred to as a simplex code. All the non-zero codewords of the obtained code are shifts of each other and are called \textit{Golomb sequences} \cite{golomb1967shift} (we can, therefore, simply say that a simplex code is generated by a Golomb sequence). Note that the dual code of $\mathcal{C},\; \mathcal{C}^\perp$ is simply a binary Hamming code generated by $f(x)$. The dimensions of $\mathcal{C}$ and $\mathcal{C}^\perp$ are $m$ and $n-m$, correspondingly. 

It is sometimes convenient to glue a few copies of the same codeword together and consider them as a periodic sequence with the period at most $n$. Golomb sequences have many desirable statistical properties. The best known of these properties claim that Golomb sequences satisfy Golomb's axioms for pseudo-random sequences \cite{golomb1967shift}. 

Let $\varphi = \{\varphi(i)\}_{i=0}^{n-1}$ be a periodic sequence, then the three axioms for $\varphi$ to be a pseudo-random sequence are as follows.
\renewcommand{\labelenumi}{[G\theenumi]}
\begin{enumerate}
\item \label{ax1} \textbf{Distribution:}  In every period the number of ones is nearly equal to the number of zeros, more precisely the difference between the two numbers is at most 1
\begin{equation}
\left|\sum_{i=0}^{n-1} (-1)^{\varphi(i)}\right| \leqslant 1.
\end{equation}
\item \label{ax2} \textbf{Serial test I:} A sequence of consecutive ones is called a block and a sequence of consecutive zeros is called a gap. A run is either a block or a gap. In every period, one half of the runs has length $1$, one quarter of the runs has length $2$, and so on, as long as the number of runs indicated by these fractions is greater than $1$. Moreover, for each of these lengths the number of blocks is equal to the number of gaps.
\item \label{ax3} \textbf{Auto-correlation:} The auto-correlation function
\begin{equation}
C(a) = \sum_{i=0}^{n-1} (-1)^{\varphi(i)+\varphi(i+a)}
\end{equation}
is two-valued.
\end{enumerate}

The distribution axiom [G\ref{ax1}] is a special case of the serial test axiom [G\ref{ax2}]. However, [G\ref{ax1}] is retained for historical reasons, and sequences which satisfy [G\ref{ax1}] and [G\ref{ax3}], but not [G\ref{ax2}], are also important.
\begin{lemma}[\cite{golomb1967shift}]
Every Golomb sequence satisfies axioms [G\ref{ax1}]-[G\ref{ax3}].
\end{lemma}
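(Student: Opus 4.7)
The plan is to exploit the realization of a Golomb sequence as the output of a maximal-length linear feedback shift register (LFSR) with primitive characteristic polynomial $f(x)$ of degree $m$. The single structural fact I would invoke throughout is that, over one period of length $n = 2^m - 1$, the $m$-bit state window of the LFSR visits every non-zero element of $GF(2)^m$ exactly once. This turns each of the three axioms into a counting statement about non-zero binary $m$-tuples.

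Axiom [G\ref{ax1}] is then immediate: among the $2^m - 1$ non-zero $m$-tuples, exactly $2^{m-1}$ have first coordinate equal to $1$ and $2^{m-1} - 1$ have first coordinate equal to $0$; reading off the first coordinate at each time step produces $2^{m-1}$ ones and $2^{m-1} - 1$ zeros in one period, so $\left|\sum_{i=0}^{n-1}(-1)^{\varphi(i)}\right| = 1$.

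For axiom [G\ref{ax3}] I would use the shift-and-add property of simplex codes: the cyclic shifts of $\varphi$ together with the zero sequence form an $m$-dimensional $GF(2)$-subspace of $GF(2)^n$, namely $\mathcal{C}$ itself. Hence for any $1 \leq a \leq n-1$, the coordinate-wise sum $\varphi(\cdot) + \varphi(\cdot + a) \mmod 2$ is again a non-zero cyclic shift of $\varphi$, and applying [G\ref{ax1}] to it yields
\begin{equation*}
C(a) = \sum_{i=0}^{n-1}(-1)^{\varphi(i)+\varphi(i+a)} = (2^{m-1} - 1) - 2^{m-1} = -1,
\end{equation*}
while trivially $C(0) = n$, so $C$ takes only the two values $n$ and $-1$.

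Axiom [G\ref{ax2}] I expect to be the main obstacle, since it requires a careful case analysis of the possible run lengths. A block of ones of length $\ell$ at a given position corresponds to an occurrence of the $(\ell+2)$-window $0\,\underbrace{1\cdots 1}_{\ell}\,0$, and I would count the non-zero LFSR states whose first $\ell+2$ coordinates match this prescribed pattern. For $1 \leq \ell \leq m - 2$, the remaining $m - \ell - 2$ state coordinates are free and all $2^{m-\ell-2}$ resulting states are automatically non-zero, producing $2^{m-\ell-2}$ blocks of length $\ell$; the symmetric count (with $0$'s and $1$'s interchanged) gives the same number of gaps of the same length, matching the halving pattern required by the axiom. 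The extremal values $\ell = m-1$ and $\ell = m$ must be handled separately: the all-ones state, which occurs once, produces exactly one block of length $m$ and no block of length $m-1$, while the absence of the all-zero state forces the longest gap to have length $m-1$ and to occur exactly once. The most delicate point is verifying that these window occurrences are in bijection with actual runs, which follows because the enclosing $(\ell+2)$-window of a run is uniquely determined by the run itself, precluding any double-counting.
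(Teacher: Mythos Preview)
The paper does not prove this lemma at all; it is stated as a cited result from Golomb's book \cite{golomb1967shift} with no accompanying argument. Your proposal supplies the classical proof, and it is correct: each axiom is reduced to a counting statement about non-zero $m$-tuples via the window property, [G\ref{ax1}] and [G\ref{ax3}] fall out immediately (the latter using that $\mathcal{C}$ is a linear code, which is available from the paper's setup prior to the lemma), and the run-length analysis for [G\ref{ax2}] is handled with the appropriate separate treatment of the extremal lengths $m-1$ and $m$. This is exactly Golomb's original argument, so there is nothing to contrast with the paper's approach---you have simply filled in what the paper delegates to the reference.
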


Besides the listed Golomb axioms, Golomb sequences also possess other ubiquitous properties.
\renewcommand{\labelenumi}{[P\theenumi]}
\begin{enumerate}
\setcounter{enumi}{3}
\item \textbf{Shift:} since $\mathcal{C}$ is a cyclic code, a cyclic shift of a Golomb sequence is a Golomb sequence. In addition, $\mathcal{C}$ is generated by any one of them.
\item \label{prop:s-a-a}\textbf{Shift-and-add:} for any $1 \leqslant a \leqslant n-1$ there exists a unique $1 \leqslant b \leqslant n-1$, such that
\begin{equation}
\varphi(i) + \varphi(i+a) = \varphi(i+b)\;\;\; \mmod 2, \;\; \forall i.
\end{equation}
Remarkably, a binary sequence satisfies the shift-and-add property iff it is a Golomb sequence.
\item \label{prop:rec}\textbf{Recurrence:} suppose 
\begin{equation}
\label{eq:rec_rel}
f(x) = \sum_{j=0}^m f_jx^j,\;\; f_j \in GF(2),\;\; f_0=f_m=1,
\end{equation}
then any Golomb sequence $\varphi$ generated by $f(x)$ satisfies the recurrence relation
\begin{equation}
\label{eq:gol_seq_gen}
\varphi(i+m) = \sum_{j=0}^{m-1} f_j \varphi(i+j),\;\;\; \mmod 2\;\; \forall i.
\end{equation}
Using all $n=2^m-1$ distinct non-zero initial values $\varphi(0),\dots,\varphi(m-1)$ in (\ref{eq:rec_rel}), we obtain $n$ Golomb sequences, which are all cyclic shifts of one sequence. There are $m$ linearly independent solutions to (\ref{eq:rec_rel}), hence $m$ linearly independent (over $GF(2)$) Golomb sequences generated by one polynomial $f(x)$.
\item \textbf{Window:} if a window of width $m$ is slid along a Golomb sequence, each of the $n$ different non-zero binary $m$-tuples is seen exactly once. This follows from the fact that $f(x)$ is a primitive polynomial.
\item \textbf{Serial test II:}  For any binary $k$-tuple $\b$, let $M(\b)$ denote the number of occurrences of $\b$ in one period of $\varphi$. Then for any $k$ with $1\leqslant k \leqslant \log_2 n$ and for any two $k$-tuples $\b$ and $\c$, we have
\begin{equation}
\left|M(\b)-M(\c)\right| \leqslant 1.
\end{equation}
\end{enumerate}

\subsection{Linear-Feedback Shift Registers}
\begin{figure}[t]
\centering
\includegraphics[width=3in]{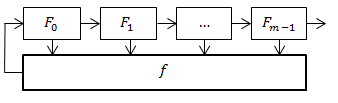}
\caption{Linear-feedback shift register defined by $f(x)$.}
\label{fig:lfsr}
\end{figure}
Property [P\ref{prop:rec}] suggests a very efficient way of generating Golomb sequences using the so-called Linear-Feedback Shift Registers (LFSR) and a timer. An LFSR is a shift register, that is a sequence of flip-flops $F_0, \dots, F_{m-1}$ as in Figure \ref{fig:lfsr} storing bits. At each tick of the clock, $F_i$ takes the value of $F_{i-1}$ for $i > 0$ and $F_0$ is updated according to the Boolean linear feedback function $f$, generating the output sequence
\begin{equation}
\label{eq:lsr_rec}
\phi(i+m) = \sum_{j=0}^{m-1} f_j \phi(i+j)\; \mod 2,\;\; \forall i.
\end{equation}
With $m$ flip-flops, we can realize a machine with up to $2^m$ states. We will always assume that the value of $\phi_{m-1}$ is the output of the shift register.

\begin{definition}
A LFSR sequence is a sequence $\phi$ satisfying the recursion (\ref{eq:lsr_rec}).
\end{definition}

Since the next value depends only on the preceding $m$ values, the sequence must be periodic. The state $(\phi(i),\dots,\phi(m+i)) = (0,\dots,0)$ leads to the constant zero sequence, thus the period of an LFSR sequence over $GF(2)$ cannot exceed $2^m-1$.

\begin{definition}
A binary LFSR sequence with period $2^m-1$ is called an $m$-sequence (maximal sequence).
\end{definition}

\begin{lemma}[\cite{golomb1967shift}]
\label{lem:lfsr_lem}
All Golomb sequences generated from different primitive polynomials and only they are the binary $m$-sequences.
\end{lemma}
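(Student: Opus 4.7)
The plan is to establish both set inclusions separately, relying on the standard correspondence between LFSR sequences and the algebra of $GF(2)[x]$ modulo the feedback polynomial. The key algebraic fact is that the period of a non-zero LFSR sequence driven by a polynomial $f(x)$ of degree $m$ is (a divisor of) the multiplicative order of $x$ in the quotient ring $GF(2)[x]/(f(x))$, and that this order is maximal, i.e.\ equal to $2^m-1$, precisely when $f$ is primitive.

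For the forward direction (every Golomb sequence is an $m$-sequence): let $\varphi$ be a non-zero Golomb sequence generated by a primitive polynomial $f(x)$ of degree $m$. By property [P\ref{prop:rec}], $\varphi$ satisfies the LFSR recursion (\ref{eq:lsr_rec}) with coefficients $\{f_j\}$, hence $\varphi$ is an LFSR sequence. It remains to verify that its period equals $2^m-1$. Primitivity of $f$ means that $x$ has order exactly $2^m-1$ in $GF(2)[x]/(f(x)) \cong GF(2^m)$, so the minimal period $T$ of $\varphi$ must equal $2^m-1$: any smaller $T$ would force $x^T \equiv 1 \pmod{f(x)}$, contradicting primitivity. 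Thus $\varphi$ is an $m$-sequence.

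For the reverse direction (every $m$-sequence is a Golomb sequence): let $\phi$ be a binary $m$-sequence with feedback polynomial $g(x)$ of degree $m$, so $\phi$ has period $2^m-1$. I will show that $g(x)$ must then be primitive, which by the definition in Section \ref{sec:gol_a} makes $\phi$ a Golomb sequence generated by $g(x)$. First, $g(x)$ must be irreducible: if $g=g_1g_2$ with coprime factors of degrees $m_1,m_2<m$, then by the Chinese Remainder Theorem any period produced by $g$ divides $\mathrm{lcm}(\mathrm{ord}(x \bmod g_1),\mathrm{ord}(x \bmod g_2))$, which is at most $(2^{m_1}-1)(2^{m_2}-1)<2^m-1$, a contradiction; the case of a repeated irreducible factor is handled analogously by bounding the order of $x$ in $GF(2)[x]/(g_i^k)$. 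Second, given $g$ irreducible of degree $m$, $GF(2)[x]/(g(x))\cong GF(2^m)$ and the order of $x$ modulo $g$ divides $2^m-1$. For $\phi$ to attain the full period $2^m-1$ this order must equal $2^m-1$, i.e.\ $x$ generates $GF(2^m)^\times$, which is exactly the definition of primitivity.

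The main obstacle is making the ``period equals multiplicative order'' correspondence precise for all non-zero initial states, and ruling out the reducible and non-primitive irreducible cases uniformly. Once those standard algebraic facts are in place, both directions reduce to the definition of primitivity; the argument is classical and is essentially the one given in the monograph cited in the lemma.
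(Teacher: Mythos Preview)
The paper does not supply its own proof of this lemma; it is quoted directly from Golomb's monograph and used as a black box. Your argument is the standard one from that reference: identify the period of an LFSR sequence with the multiplicative order of $x$ modulo the feedback (more precisely, the minimal) polynomial, and then observe that this order attains $2^m-1$ exactly when the polynomial is primitive. The sketch is correct. One small point worth tightening in the reverse direction: your bound $\mathrm{ord}(x \bmod g_i)\le 2^{m_i}-1$ holds for \emph{any} $g_i$ of degree $m_i$ with $g_i(0)\ne 0$ simply because the unit group of $GF(2)[x]/(g_i)$ has at most $2^{m_i}-1$ elements, so you need not assume $g_i$ irreducible there; this immediately covers the repeated-factor case as well, since $g=p^e$ gives $\mathrm{ord}(x\bmod g)\mid (2^d-1)2^{d(e-1)}$, an even number, which cannot equal the odd target $2^m-1$.
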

Below we use the pseudo-random properties of Golomb sequences listed above to construct pseudo-random matrices. Interestingly, it was shown in \cite{zierler1959linear} that the Shift-and-add Property [P\ref{prop:s-a-a}] is the crucial one as it completely determines the binary $m$-sequences and imply all their other properties.

\section{The Construction}
\label{sec:def}
In this section, we provide a certain construction of a sequence of $n \times n$ matrices with $n=2^m-1,\; m \in \N$ from Golomb sequences of lengths $n$. Later we show that when the sizes of these matrices grow, their spectra converge to the semicircular law.

Let $\mathcal{C}$ be the simplex code constructed from the primitive binary polynomial $f(x)$ as before. Fix a non-zero codeword $\varphi \in \mathcal{C}$ (a Golomb sequence) and construct a real symmetric matrix
\begin{equation}
\label{eq:def_a_eq}
\A_n = \{a_{ij}\}_{i,j=0}^{n-1} = \bigg\{\frac{1}{2\sqrt{n}}(-1)^{\varphi(i-j) + \varphi(j-i)}\bigg\}_{i,j=0}^{n-1}.
\end{equation}
Matrix $\A_n$ can be interpreted in the following way. Consider a circulant non-symmetric matrix

\begin{equation}
\T = \begin{pmatrix} 
\varphi(0) & \varphi(1) & \varphi(2) & \dots & \varphi(n-1) \\
\varphi(n-1) & \varphi(0) & \varphi(1) & \dots & \varphi(n-2) \\
\varphi(n-2) & \varphi(n-1) & \varphi(0) & \dots & \varphi(n-3) \\
\vdots & \vdots & \vdots & \ddots & \vdots \\
\varphi(1) & \varphi(2) & \varphi(3) & \dots & \varphi(0) \\
\end{pmatrix}.
\end{equation}
The consecutive rows of $\T$ are simply cyclic shifts of the Golomb sequence written in its first rows. The symmetric matrix $\A_n$ can now be written as
\begin{equation}
\A_n = \frac{1}{2\sqrt{n}}\zeta(\T+\T^\top).
\end{equation}
It is easy to check that the obtained matrix is circulant, since for any $k \in [n],\; \A_n$ is invariant under the shift of indices of the form
\begin{equation}
i \to i+ k \mod n,\quad j \to j+k \mod n.
\end{equation}

Recall that any non-zero codeword of $\mathcal{C}$ is a cyclic shift of $\varphi$, therefore, we may obtain an ensemble of matrices from the code $\mathcal{C}$ indexed by integers within the range $a \in [n]$, as
\begin{equation}
\A_n(a) = \bigg\{\frac{1}{2\sqrt{n}}(-1)^{\varphi(i-j+a) + \varphi(j-i+a)}\bigg\}_{i,j=0}^{n-1},
\end{equation}
with the original matrix $\A_n$ corresponding to $\A_n(0)$.
\begin{definition}
Given a primitive binary polynomial $f(x)$, an ensemble of pseudo-random matrices $\mathcal{A}_n$ of order $n$ is the set of all $\A_n(a),\; a \in [n]$ and their negatives, endowed with the uniform probability measure.
\end{definition}
Below, whenever expectation over $\A_n$ is considered it should be always treated with respect to the uniform measure over $\mathcal{A}_n$.

\section{The Spectra of the Pseudo-random Matrices}
\label{sec:main}
In this section, we demonstrate that the spectra of matrices $\A_n$ uniformly chosen from $\mathcal{A}_n$ converge almost surely to the semicircular law when their sizes grow to infinity. We start from a number of auxiliary definitions and results. 

\subsection{Wigner's Ensemble and the Semicircular Law}
For a real symmetric matrix $\A_n \in S_n$, denote by
\begin{equation}
\lambda_1(\A_n) \leqslant \dots \leqslant \lambda_n(\A_n)
\end{equation}
its eigenvalues, which are all real. Let $F_{\A_n}$ be the cumulative density function (c.d.f.) associated with the spectrum of $\A_n$,
\begin{equation}
F_{\A_n}(x) = \frac{1}{n}\sum_{i=1}^{n} \theta\(x-\lambda_i(\A_n)\),
\end{equation}
where $\theta(x)$ is the unit step function at zero. The $r$-th moment of $\A_n$ reads as
\begin{equation}
\label{eq:mom_def_trace}
\beta_r(\A_n) = \int x^r dF_{\A_n} = \frac{1}{n}\Tr{\A_n^r}.
\end{equation}
Denote by $F_{sc}$ the c.d.f. of the standard semicircular law
\begin{multline}
\label{eq:sc_cdf_def}
F_{sc}(x) = \frac{1}{2} + \frac{1}{\pi}x\sqrt{1-x^2}+\frac{1}{\pi}\arcsin(x), \\ -1 \leqslant x \leqslant 1,
\end{multline}
and the corresponding probability density function (p.d.f.) by
\begin{equation}
\label{eq:sc_pdf_def}
f_{sc}(x) = \frac{2}{\pi} \sqrt{1-x^2},\;\; -1 \leqslant x \leqslant 1.
\end{equation}
The moments of this distribution read as \cite{wigner1955characteristic}
\begin{equation}
\label{eq:dc_mom}
\beta_r = \int x^r dF_{sc} = \begin{cases} \;\; 0, & r \text{ odd}, \\ \frac{C_{r/2}}{2^r}, & r \text{ even},\end{cases}
\end{equation}
where
\begin{equation}
\label{eq:catal_num}
C_r = \frac{(2r)!}{r!(r+1)!}
\end{equation}
is the $r$-th Catalan number. When the sizes of Wigner matrices grow, their moments converge to those of the semicircular law and their variances are summable, which ensures the almost sure convergence to the semicircular law as stated by the following
\begin{lemma}[\cite{wigner1955characteristic, arnold1967asymptotic}, Theorem 2.5 from \cite{bai2010spectral}]
\label{lem:mom_conv_norm}
Let $\W_n \in \mathcal{W}_n$, then as $n$ tends to infinity,
\begin{equation}
\mathbb{E}\[\beta_r(\W_n)\] = \begin{cases} \beta_r + o(1), & r \text{ even}, \\ \quad\;\; 0, & r \text{ odd},\end{cases}
\end{equation}
and for the variances of $\beta_r(\W_n)$,
\begin{equation}
\label{eq:var_cond}
\var{\beta_r(\W_n)} = O\(\frac{1}{n^2}\).
\end{equation}
\end{lemma}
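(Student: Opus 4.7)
The natural route is the combinatorial method of moments in its classical form. Writing
\begin{equation*}
\beta_r(\W_n) = \frac{1}{n}\Tr{\W_n^r} = \frac{1}{n(2\sqrt{n})^r}\sum_{i_1,\dots,i_r \in [n]} s_{i_1 i_2} s_{i_2 i_3}\cdots s_{i_r i_1},
\end{equation*}
where the $s_{ij} = \pm 1$ are the symmetric Rademacher signs carried by $\W_n$, one indexes each summand by a closed walk $W = (i_1, i_2, \dots, i_r, i_1)$ on the complete graph with vertex set $[n]$. Under the uniform measure on $\mathcal{W}_n$ the signs $\{s_{ij}\}_{i \leqslant j}$ are independent Rademacher variables, so $\EE{s_{i_1 i_2}\cdots s_{i_r i_1}}$ vanishes unless every undirected edge appearing in $W$ is traversed an even number of times, in which case the expectation equals one. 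Thus both the mean and the variance reduce to enumeration of closed walks with even-multiplicity edge profiles.

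For odd $r$ no such walks exist, because their total edge multiset has odd cardinality $r$; hence $\EE{\beta_r(\W_n)} = 0$ identically. For even $r$ such walks visit at most $r/2 + 1$ distinct vertices, and the dominant contribution comes from walks whose underlying edge set forms a tree with exactly $r/2$ edges each traversed precisely twice. Wigner's bijection between such walks and Dyck paths shows that, for any fixed ordered vertex set of size $r/2 + 1$, the number of such walks equals the Catalan number $C_{r/2}$. Multiplying by the $n(n-1)\cdots(n-r/2)$ orderings of distinct vertices drawn from $[n]$ and dividing by the normalization $n(2\sqrt{n})^r$ gives $C_{r/2}/2^r + O(1/n)$, i.e.\ $\beta_r + o(1)$. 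Walks using fewer than $r/2+1$ vertices contribute only $O(n^{r/2})$ terms to a sum normalized by $n^{r/2+1}$, and are absorbed into the $O(1/n)$ correction.

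For the variance I would expand
\begin{equation*}
\EE{\beta_r(\W_n)^2} = \frac{1}{n^2(2\sqrt{n})^{2r}}\sum_{W_1, W_2} \EE{\prod_{e \in W_1} s_e \prod_{e \in W_2} s_e}
\end{equation*}
over pairs of closed walks of length $r$. Whenever the edge supports of $W_1$ and $W_2$ are disjoint, the joint expectation factors, and the collection of such disjoint pairs reproduces $\EE{\beta_r(\W_n)}^2$ up to an $O(1/n^2)$ boundary correction. The variance is therefore governed by pairs whose edge supports overlap in at least one edge. Viewing such a pair as a single concatenated closed walk of length $2r$, the forced connectedness across the shared edge reduces the number of admissible distinct vertices by at least one relative to the factorized count, producing an extra $1/n$ factor in the enumeration and yielding $\var{\beta_r(\W_n)} = O(1/n^2)$.

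The principal obstacle is the bookkeeping for the variance estimate: one has to stratify pairs $(W_1, W_2)$ by the isomorphism type of their shared edge multigraph, verify that pairs with disjoint supports reproduce $\EE{\beta_r(\W_n)}^2$ up to the claimed error, and argue that every remaining stratum really suffers a vertex deficit of at least one. The mean computation itself becomes a clean exercise once the even-multiplicity parity argument is in place, so the difficulty is essentially combinatorial enumeration rather than probabilistic or analytic.
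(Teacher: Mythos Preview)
Your proposal is correct and follows essentially the same classical method-of-moments argument that the paper sketches: both expand the trace over closed walks, identify the even-edge-multiplicity walks as the only nonvanishing contributions, extract the Catalan count from the tree-shaped walks on $r/2+1$ vertices, and handle the variance by pairing walks and exploiting the vertex deficit forced by a shared edge. The only cosmetic difference is that you dispose of odd moments via the parity obstruction on edge multiplicities, whereas the paper invokes the $\W_n \mapsto -\W_n$ symmetry of the ensemble.
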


\begin{corollary}[\cite{arnold1967asymptotic}, Theorem 2.5 from \cite{bai2010spectral}]
\label{corol:conv}
The empirical spectra of matrices from Wigner's ensemble converge almost surely weakly to the semicircular law.
\end{corollary}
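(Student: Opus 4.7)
The plan is to derive the corollary from Lemma \ref{lem:mom_conv_norm} by the standard method of moments combined with a Borel--Cantelli argument. Concretely, I want to upgrade moment convergence in expectation plus a summable-variance bound into almost sure convergence of every moment, then invoke the moment-determinacy of the semicircular law.

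First I would fix an integer $r\geqslant 1$ and consider the random variable $\beta_r(\W_n)-\mathbb{E}[\beta_r(\W_n)]$. By Chebyshev's inequality together with the variance bound \eqref{eq:var_cond}, for any $\varepsilon>0$ one has
\begin{equation}
\Pr{\left|\beta_r(\W_n)-\mathbb{E}[\beta_r(\W_n)]\right|>\varepsilon}\leqslant \frac{\var{\beta_r(\W_n)}}{\varepsilon^2}=O\!\left(\frac{1}{n^2}\right),
\end{equation}
which is summable in $n$. The Borel--Cantelli lemma then gives $\beta_r(\W_n)-\mathbb{E}[\beta_r(\W_n)]\to 0$ almost surely, and combining this with the mean convergence from Lemma \ref{lem:mom_conv_norm} yields $\beta_r(\W_n)\to \beta_r$ almost surely, where $\beta_r$ are the semicircular moments \eqref{eq:dc_mom}.

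Next, since the exceptional set depends on $r$, I would take a countable intersection over $r\in\mathbb{N}$ of the full-measure events obtained above; this remains a full-measure event, and on it every empirical moment of $\W_n$ converges to the corresponding semicircular moment. The semicircular distribution is compactly supported on $[-1,1]$, hence uniquely determined by its moments (Carleman's condition is trivially satisfied), so the method of moments applies: convergence of all moments to those of $F_{sc}$ implies weak convergence $F_{\W_n}\Rightarrow F_{sc}$. Doing this on the full-measure event gives almost sure weak convergence.

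The only mildly delicate point is ensuring that the $o(1)$ and $O(1/n^2)$ estimates in Lemma \ref{lem:mom_conv_norm} are valid for \emph{every} fixed $r$ (the constants may depend on $r$), so that the per-$r$ Borel--Cantelli argument is legitimate; the countable union over $r$ is then harmless. I do not anticipate a serious obstacle beyond this, as the argument is the textbook derivation (cf.\ Theorem 2.5 of \cite{bai2010spectral}) and all its ingredients are already furnished by the preceding lemma.
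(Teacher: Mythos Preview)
Your proposal is correct and follows exactly the route the paper indicates: the paper's own proof simply says the result ``follows from Lemma \ref{lem:mom_conv_norm} using Chebyshev's inequality and the Borel--Cantelli Lemma.'' You have merely spelled out the details of that one-line argument, including the moment-determinacy of the compactly supported semicircular law, so there is nothing to add.
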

\begin{proof}
Follows from Lemma \ref{lem:mom_conv_norm} using Chebyshev's inequality and the Borel-Cantelli Lemma.
\end{proof}

\subsection{Spectrum of the Pseudo-random Construction}
In the next two lemmas, we show an analog of Lemma \ref{lem:mom_conv_norm} for the proposed pseudo-random matrices which would guarantee the almost sure weak convergence of their spectra to the semicircular law.
\begin{prop}
\label{lem:main_exp}
Let $\A_n \in \mathcal{A}_n$, then for a fixed $r \in \N$ and $n = n(m)$ tending to infinity,
\begin{equation}
\mathbb{E}\[\beta_r(\A_n)\] = \begin{cases} \beta_r + O\(\frac{1}{n}\), & r \text{ even}, \\ \qquad 0, & r \text{ odd}.\end{cases}
\end{equation}
\end{prop}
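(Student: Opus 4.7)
The ensemble $\mathcal{A}_n$ is closed under $\A \mapsto -\A$, and $\beta_r(-\A) = (-1)^r \beta_r(\A)$, so averaging these two matrices annihilates odd $r$ and gives $\mathbb{E}[\beta_r(\A_n)] = 0$ for odd $r$. For even $r$ the sign average is trivial, so it suffices to compute the cyclic-shift average $\frac{1}{n}\sum_{a \in [n]} \beta_r(\A_n(a))$.

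\textbf{Trace expansion via the circulant structure.} Each $\A_n(a)$ is circulant with entry $a_{ij} = g_a(i-j)$ for $g_a(k) = \frac{1}{2\sqrt{n}}(-1)^{\varphi(k+a)+\varphi(-k+a)}$, so expanding the trace and parameterizing closed walks of length $r$ by their increments $\b = (b_1,\ldots,b_r) \in [n]^r$ (with $\sum_k b_k \equiv 0 \pmod{n}$) gives, after interchanging sums,
\begin{equation*}
\mathbb{E}[\beta_r(\A_n)] = \frac{1}{n(2\sqrt{n})^r} \sum_{\b\,:\,\sum b_k \equiv 0}\,\sum_{a \in [n]} (-1)^{F_{\b}(a)},
\end{equation*}
where $F_{\b}(a) = \sum_{x \in D(\b)} \varphi(x+a)$ and $D(\b) = \bigsqcup_{k=1}^{r}\{b_k,-b_k\}$ is treated as a multiset modulo $2$.

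\textbf{Evaluating the character sum.} Iterated applications of the shift-and-add property [P\ref{prop:s-a-a}] show that $F_{\b}$ always lies in the simplex code generated by $\varphi$: it is either the zero codeword, in which case $\sum_a (-1)^{F_{\b}(a)} = n$, or it equals $\varphi(\cdot + c_{\b})$ for some $c_{\b} \in [n]$, in which case $\sum_a (-1)^{F_{\b}(a)} = -1$ because every non-zero simplex codeword has weight exactly $(n+1)/2$ (a sharpening of Axiom [G\ref{ax1}] specific to the equidistant simplex code). Letting $N_0$ denote the number of walks with $F_{\b} \equiv 0$, these two cases combine to the closed form
\begin{equation*}
\mathbb{E}[\beta_r(\A_n)] = \frac{(n+1)\,N_0 - n^{r-1}}{n(2\sqrt{n})^r}.
\end{equation*}

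\textbf{Counting canceling walks and the main obstacle.} For even $r = 2s$, matching this to $C_s/2^{2s} + O(1/n) = \beta_r + O(1/n)$ reduces to proving $N_0 = \bigl(n^{r-1} + C_s\,n^{s+1}\bigr)/(n+1) + O(n^{s-1})$. The plan is to organize walks by the pair-class profile $[b_k] = \{b_k,-b_k\}$: the leading contribution should come from walks whose profile is a Wigner-type pair-partition (each pair-class visited an even number of times, making every multiplicity in $D(\b)$ even and $F_{\b} \equiv 0$ automatic). Smaller ``accidental'' cancellations arising from symmetric low-weight codewords of the dual Hamming code will be controlled using (i) the injectivity of the map $[y] \mapsto \alpha^y + \alpha^{-y}$ from non-zero pair-classes into $\mathbb{F}_{2^m}$, which rules out the smallest non-trivial symmetric relations among shifts of $\varphi$, and (ii) standard estimates for joint counts of solutions of the cyclic constraint $\sum_k b_k \equiv 0 \pmod{n}$ together with the algebraic constraint $\sum_{x \in D(\b)} \alpha^x = 0$ in $\mathbb{F}_{2^m}$. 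The main technical obstacle is this combinatorial bookkeeping: showing that the interplay between the $\mathbb{Z}/n\mathbb{Z}$ sum constraint and the $\mathbb{F}_{2^m}$ algebraic constraint reproduces exactly the Catalan number $C_s$ at the leading order, with all non-canonical shapes contributing only $O(n^{s-1})$ to $N_0$.
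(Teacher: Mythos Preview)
Your reductions are correct and match the paper: the sign-symmetry argument for odd $r$, the circulant trace expansion into increments $\b$ with $\sum_k b_k\equiv 0$, the use of shift-and-add together with the two-valued autocorrelation to evaluate the inner sum, and the closed form $\mathbb{E}[\beta_r(\A_n)]=\bigl((n+1)N_0-n^{r-1}\bigr)/\bigl(n(2\sqrt{n})^r\bigr)$ are all right, as is the target asymptotic for $N_0$. You also correctly separate the Catalan contribution (walks with every multiplicity in $D(\b)$ even, i.e.\ $\nu(\b)=\bm 0$) from the ``accidental'' walks where $\nu(\b)$ is a non-zero codeword of the dual Hamming code.

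The gap is that the decisive step is only a plan. You appeal to ``standard estimates for joint counts of solutions'' of the $\mathbb{Z}/n\mathbb{Z}$ constraint together with the $\mathbb{F}_{2^m}$ constraint $\sum_{x\in D(\b)}\alpha^x=0$, but you neither name nor prove such an estimate; the injectivity of $[y]\mapsto\alpha^y+\alpha^{-y}$ handles only the lowest weights and does not by itself yield the required $O(n^{s-1})$ control on $N_0$ over all non-canonical shapes. The paper closes exactly this gap, but via different machinery than you sketch: it observes that every $\nu(\t)$ is a palindrome with vanishing zeroth coordinate, passes to the half-length code $\overline{\mathcal{H}'}$ (showing $\dim\langle\overline{\mathcal{H}'}\rangle=\frac{n-1}{2}-m$ via a reciprocal-polynomial argument), and then invokes a theorem of Vl\u{a}du\c{t} on the weight spectrum of subfield subcodes of algebraic-geometric codes to obtain $\bigl|\pi_{\langle\overline{\mathcal{H}'}\rangle}(l)-\frac{1}{n+1}\binom{(n-1)/2}{l}\bigr|=O(n^{l/2})$. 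That single spectrum bound is what forces the accidental term to be $O(n^{r/2})$ after normalization, hence $O(1/n)$ in $\mathbb{E}[\beta_r]$. Your $\mathbb{F}_{2^m}$ formulation is equivalent to the coding-theoretic one (the relation $\sum_x\alpha^x=0$ is the Hamming parity check), so both routes attack the same count; the difference is that the paper supplies a specific, non-elementary spectrum theorem that finishes the argument, whereas your proposal leaves this estimate open.
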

\begin{proof}
The proof can be found in Appendix \ref{app:proof_main_res}.
\end{proof}

\begin{prop}
\label{lem:main_var}
Let $\A_n \in \mathcal{A}_n$, then for a fixed $r \in \N$ and $n = n(m)$ tending to infinity,
\begin{equation}
\var{\beta_r(\A_n)} = O\(\frac{1}{n^2}\).
\end{equation}
\end{prop}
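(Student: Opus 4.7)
The plan is to adapt the moment-method machinery of Proposition~\ref{lem:main_exp} from single closed walks to pairs of closed walks of length $r$. Expanding,
\begin{equation*}
\var{\beta_r(\A_n)} = \frac{1}{n^2(2\sqrt{n})^{2r}}\sum_{\i,\j\in[n]^r}\Bigl(\EE{S_{\i}S_{\j}}-\EE{S_{\i}}\EE{S_{\j}}\Bigr),
\end{equation*}
where $S_{\i}(a,\epsilon)=\epsilon^{r}\prod_{k=1}^{r}(-1)^{\varphi(i_{k}-i_{k+1}+a)+\varphi(i_{k+1}-i_{k}+a)}$ and the expectation is taken over the uniform choice of $(a,\epsilon)\in[n]\times\{\pm 1\}$.

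The first step is the shift-and-add reduction. Iterating property [P\ref{prop:s-a-a}], each walk exponent $\Phi_{\i}(a)=\sum_{k}[\varphi(i_{k}-i_{k+1}+a)+\varphi(i_{k+1}-i_{k}+a)]$ collapses modulo $2$ either to the zero function (call $\i$ \emph{balanced}) or to a single shifted copy $\varphi(c(\i)+a)$ for some $c(\i)\in[n]$; the same trichotomy applies to the joint exponent $\Phi_{\i}+\Phi_{\j}$. Combining this with the sign average and with the distribution axiom [G\ref{ax1}], which gives $\bigl|\tfrac{1}{n}\sum_{a}(-1)^{\varphi(c+a)}\bigr|\le 1/n$ uniformly in $c$, every individual expectation is evaluated to precision $O(1/n)$: a balanced configuration returns $1$, a non-balanced one returns $O(1/n)$.

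The second step is the combinatorial bookkeeping of the pair-sum. Pairs $(\i,\j)$ in which each walk and their joint walk are all balanced (the Wigner-tree pairs whose two plane trees are vertex-disjoint) contribute $1$ to both $\EE{S_{\i}S_{\j}}$ and $\EE{S_{\i}}\EE{S_{\j}}$ and therefore cancel exactly. Pairs in which at least one of the three is non-balanced generate residuals of size $O(1/n)$ in either factor, and the Wigner enumeration of closed walks bounds the number of pairs $(\i,\j)$ whose joint edge graph has at most $r+1$ distinct vertices by $O(n^{r+1})$. Dividing by the normalization $n^{2}(2\sqrt{n})^{2r}\asymp n^{r+2}$ turns an $O(1/n)\cdot O(n^{r+1})$ total into $O(1/n^{2})$.

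The main obstacle is the matched-shift case: pairs $(\i,\j)$ both non-balanced but with $c(\i)=c(\j)$ contribute $1$ to $\EE{S_{\i}S_{\j}}$ (since $\Phi_{\i}+\Phi_{\j}\equiv 0$) while contributing only $O(1/n^{2})$ to $\EE{S_{\i}}\EE{S_{\j}}$, so their difference does not cancel a priori. Controlling them requires showing that the Golomb classifier $c(\cdot)$ is sufficiently well-spread on non-balanced walks, so that the constraint $c(\i)=c(\j)$ fixes one algebraic degree of freedom of $\j$ given $\i$ and shrinks the matched-shift pair count by a factor of $n$. This spreading property follows from the cyclic-shift invariance of $\A_{n}$ and the linear structure of the simplex code over $GF(2)$, but making it quantitative is the technical heart of the argument. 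Once in hand, the same bookkeeping that underlies the classical Wigner bound~(\ref{eq:var_cond}) delivers the claimed $O(1/n^{2})$ estimate for both even and odd $r$ (with the odd case being easier, since $\EE{S_{\i}}=0$ eliminates the second term outright).
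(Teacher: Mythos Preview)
Your proposal has the right skeleton---expand the variance as a double sum over closed walks, collapse each exponent via shift-and-add, and isolate the contribution that does not cancel---but two of its load-bearing steps do not hold up.

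First, the ``at least one of the three is non-balanced'' paragraph misapplies the Wigner enumeration. In Wigner's setting, a pair $(\i,\j)$ whose joint edge graph has more than $r+1$ vertices necessarily contains an edge traversed only once, so independence kills both $\EE{w_\i w_\j}$ and $\EE{w_\i}\EE{w_\j}$ and those pairs drop out for free. Here that mechanism is absent: two non-balanced walks with distinct shifts $c(\i)\neq c(\j)$ and completely disjoint supports still give $\EE{S_\i S_\j}=-1/n$ while $\EE{S_\i}\EE{S_\j}=1/n^2$, so their covariance is $-1/n-1/n^2$. There are $\Theta(n^{2r})$ such pairs, far more than the $O(n^{r+1})$ vertex-constrained count you invoke, and an $O(1/n)$ bound per term is off by a factor of $n^{r-1}$. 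What actually happens is that the matched ($+1$) and unmatched ($-1/n$) contributions combine algebraically into $(1+1/n)\sum_{c}(N_c-\bar N)^2$ with $N_c=\#\{\i:c(\i)=c\}$, so the \emph{entire} both-non-balanced sum---not just the matched-shift piece---reduces to the spreading estimate. This agrees with your intuition that spreading is the technical heart, but not with your bookkeeping that treats the unmatched case as already closed.

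Second, the spreading estimate is not available from ``cyclic-shift invariance and the linear structure of the simplex code''; neither gives quantitative control on how evenly the map $\i\mapsto c(\i)$ distributes mass over $[n]$. The paper obtains it from a weight-spectrum bound for subfield subcodes of algebraic-geometric codes (Lemma~\ref{lemma:code_lengths}, after Vl\u{a}du\c{t}), applied to a palindromic subcode of the Hamming code. This is precisely the estimate that yielded $\mathrm{III}'=O(n^{r/2})$ in Proposition~\ref{lem:main_exp}; the variance proof simply reuses it---once on $2r$-tuples to bound $\sum_{\i,\j}\EE{S_\i S_\j}$, and once squared to bound $\big(\sum_\i\EE{S_\i}\big)^2$---rather than attempting a termwise covariance analysis. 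Without that input (or a genuine substitute), the argument does not close.
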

\begin{proof}
The proof can be found in Appendix \ref{app:proof_main_res}.
\end{proof}

\begin{corollary}
\label{cor:main}
The empirical spectra of matrices $\A_n \in \mathcal{A}_n$ converge almost surely weakly to the semicircular law.
\end{corollary}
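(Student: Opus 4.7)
The plan is a direct parallel to the proof of Corollary \ref{corol:conv} for the Wigner ensemble, differing only in that the probabilistic inputs come from Propositions \ref{lem:main_exp} and \ref{lem:main_var} rather than Lemma \ref{lem:mom_conv_norm}. The strategy is the method of moments combined with a Borel--Cantelli argument, so the proof reduces to three essentially routine steps.

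First, fix $r \in \N$. By Proposition \ref{lem:main_var} together with Chebyshev's inequality, for every $\epsilon > 0$,
\begin{equation}
\Pr{|\beta_r(\A_n) - \mathbb{E}[\beta_r(\A_n)]| > \epsilon} \leqslant \frac{\var{\beta_r(\A_n)}}{\epsilon^2} = O\!\(\frac{1}{n^2}\).
\end{equation}
Since the admissible sizes are $n = n(m) = 2^m - 1$, the corresponding probabilities are summable in $m$ (in fact exponentially so). The Borel--Cantelli lemma therefore gives $\beta_r(\A_n) - \mathbb{E}[\beta_r(\A_n)] \to 0$ almost surely along the sequence $n(m)$. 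Combining this with Proposition \ref{lem:main_exp}, which asserts $\mathbb{E}[\beta_r(\A_n)] \to \beta_r$, I conclude that $\beta_r(\A_n) \to \beta_r$ almost surely, where $\beta_r$ is the $r$-th moment of the semicircular law as in (\ref{eq:dc_mom}).

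Second, I take the intersection of the almost sure events over all $r \in \N$. Since this is a countable intersection of measure-one events, the resulting event still has probability one, and on it $\beta_r(\A_n) \to \beta_r$ for every fixed $r$ simultaneously.

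Third, I invoke the method of moments. The semicircular law has compact support $[-1,1]$, hence is uniquely determined by its moments (Carleman's condition is trivially satisfied). Consequently, almost sure convergence of every moment $\beta_r(\A_n)$ to $\beta_r$ implies that the empirical c.d.f.s $F_{\A_n}$ converge weakly to $F_{sc}$ almost surely, which is the desired conclusion. No step poses a real obstacle here, since all the difficult combinatorial work is absorbed in Propositions \ref{lem:main_exp} and \ref{lem:main_var}; the only point requiring a touch of care is making the probability space for the sequence $\{\A_n\}$ explicit (say, as the product of the uniform measures on the finite ensembles $\mathcal{A}_{n(m)}$) so that the Borel--Cantelli application is formally justified.
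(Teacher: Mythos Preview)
Your proposal is correct and follows exactly the approach the paper indicates: the paper's proof of Corollary~\ref{cor:main} simply says it follows from Propositions~\ref{lem:main_exp} and~\ref{lem:main_var} ``through the same argument as in the proof of Corollary~\ref{corol:conv},'' which in turn is Chebyshev plus Borel--Cantelli. You have spelled out those steps in full (including the countable intersection and the moment-determinacy of the semicircular law), so your write-up is in fact more detailed than the paper's own.
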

\begin{proof}
Follows from Propositions \ref{lem:main_exp} and \ref{lem:main_var} through the same argument as in the proof of Corollary \ref{corol:conv}.
\end{proof}

\begin{rem}
In fact, we believe that even a stronger result holds. Namely, that the empirical spectra of matrices $\A_n$ converge to the semicircular law when $n$ grows. This statement does not involve probability and averaging over an ensemble, which may make it harder to be proved.
\end{rem}

This surprising result has a number of consequences. First, it provides a deterministic construction of matrices whose spectra converge to the semicircular law and shows that such matrices may be constructed with the help of a simple algorithm. In Section \ref{sec:kolm}, we exactly quantify the algorithmic complexity of the proposed construction. Second, unlike the common belief that matrices whose spectra converge to the semicircular law must be \emph{random-looking}, it shows that this is not the case and they may in fact be very structured. In particular, the pseudo-random family of matrices we suggest consists of circulant matrices commuting with each other and, therefore, having a common fixed eigenbasis (this basis can be easily obtained from a discrete Fourier matrix).

\subsection{Relation to the Marchenko-Pastur Law}
\label{sec:mp_subseq}
Interestingly, the above construction also enables us to build a sequence of pseudo-random matrices whose empirical spectral distributions converge to the Marchenko-Pasur law defined through its p.d.f. as
\begin{align}
\label{eq:mp_law}
f_{MP}(x;\gamma) = \frac{1}{2\pi \gamma x}\sqrt{(a_+-x)(x-a_-)}, & \\
& \hspace{-1.2cm} a_- \leqslant x \leqslant a_+, \nonumber
\end{align}
where $a_{\pm} = (1\pm\sqrt{\gamma})^2$ and $\gamma$ is the so-called aspect ratio parameter. Distribution (\ref{eq:mp_law}) naturally appears as the limiting spectral law of the sample covariance matrices of $m_n$ i.i.d. (independent and identically distributed) isotropic $n$-dimensional samples with
\begin{equation}
\gamma = \lim_{n \to \infty} \frac{m_n}{n} \leqslant 1,
\end{equation}
under some additional conditions on the population distribution \cite{marchenko1967distribution, pillai2014universality}. As a direct corollary of the main result from the seminal work \cite{marchenko1967distribution} of Marchenko and Pastur we get the following statement.
\begin{lemma}[Corollary of Theorem 1 from \cite{marchenko1967distribution}]
\label{prop:mp_law}
Let for every $n \in \N,\; \{\x_{ij}\}_{i,j=0}^{m_n-1,n-1}$ be i.i.d. Rademacher random variables and denote
\begin{equation}
\X_{m_n,n} = \bigg\{\frac{1}{\sqrt{n}}x_{ij}\bigg\}_{i,j=0}^{m_n-1,n-1},
\end{equation}
then the empirical spectra of the sample covariance matrices $\X_{m_n,n}\X_{m_n,n}^\top$ converge almost surely weakly to the Marchenko-Pastur law with aspect ratio $\gamma$.
\end{lemma}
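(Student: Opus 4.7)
The plan is to verify that Rademacher entries satisfy the hypotheses of the original Marchenko--Pastur theorem and then invoke it directly. Specifically, I would check that $\mathbb{E}[x_{ij}]=0$, $\mathbb{E}[x_{ij}^2]=1$, and that moments of all orders are finite (trivially so, since $|x_{ij}|=1$), which comfortably subsumes the moment condition appearing in any standard version of the theorem. The aspect-ratio condition $m_n/n \to \gamma \le 1$ is already part of the lemma's hypotheses, so the statement is a direct specialization of Theorem~1 of \cite{marchenko1967distribution} together with its almost-sure strengthening (cf.\ \cite{bai2010spectral}).

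If a self-contained proof were desired, I would follow the method of moments, paralleling the structure already used for Propositions~\ref{lem:main_exp} and \ref{lem:main_var}. Writing
$$\mathbb{E}\bigl[\beta_r(\X_{m_n,n}\X_{m_n,n}^{\top})\bigr] = \frac{1}{n^{r+1}}\sum \mathbb{E}\bigl[x_{i_1 j_1}x_{i_2 j_1}x_{i_2 j_2}\cdots x_{i_1 j_r}\bigr],$$
where the sum ranges over closed walks of length $2r$ on the complete bipartite graph $K_{m_n,n}$, only walks whose edges are traversed exactly twice contribute to leading order (the parity of Rademacher variables kills all other terms, while higher multiplicities contribute $O(n^{-1})$). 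Classifying the surviving walks by the number $k$ of distinct row-indices used yields the Narayana enumeration $\tfrac{1}{r}\binom{r}{k}\binom{r}{k-1}$, whose generating function against $\gamma^{k-1}$ reproduces the $r$-th moment of $f_{MP}(\cdot;\gamma)$. An analogous but more elaborate expansion of $\mathbb{E}\bigl[\Tr{(\X_{m_n,n}\X_{m_n,n}^{\top})^r}^2\bigr]$ gives $\var{\beta_r} = O(n^{-2})$, after which Chebyshev's inequality and the Borel--Cantelli lemma upgrade convergence in mean to almost-sure weak convergence.

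The main obstacle, if one insisted on reproving everything from scratch, would be the combinatorial enumeration of noncrossing bipartite pairings and the careful bookkeeping of subleading walks; however, since this content is classical and \cite{marchenko1967distribution} is cited in the statement, these steps can be quoted verbatim. The genuine content of the lemma is therefore just the verification of hypotheses, which is immediate for Rademacher entries.
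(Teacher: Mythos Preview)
Your proposal is correct and matches the paper's treatment: the paper does not prove this lemma at all but simply labels it ``Corollary of Theorem~1 from \cite{marchenko1967distribution}'' and moves on, so your verification that Rademacher entries meet the moment hypotheses and then citing the classical result is precisely what the authors intend. The additional moment-method sketch you offer is more than the paper provides and is not needed here.
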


As a corollary of Propositions \ref{lem:main_exp} and \ref{lem:main_var}, we get
\begin{corollary}
\label{cor:main_mp}
The empirical spectra of matrices $4\A_n^2$ with $\A_n \in \mathcal{A}_n$ converge almost surely weakly to the Marchenko-Pastur law with $\gamma=1$.
\end{corollary}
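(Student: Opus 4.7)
The plan is to deduce the statement from the already-established convergence of $F_{\A_n}$ to the semicircular law. Since the eigenvalues of $4\A_n^2$ are precisely $\{4\lambda_i(\A_n)^2\}_{i=1}^n$, the empirical spectral distribution $F_{4\A_n^2}$ is the pushforward of $F_{\A_n}$ under the continuous map $h(x)=4x^2$. The continuous mapping theorem then preserves weak convergence: the almost-sure statement $F_{\A_n}\to F_{sc}$ weakly from Corollary \ref{cor:main} transfers to almost-sure weak convergence $F_{4\A_n^2}\to h_*F_{sc}$, and it only remains to identify $h_*F_{sc}$ with the Marchenko--Pastur law at $\gamma=1$.

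For this identification, a direct change-of-variables (accounting for the two-to-one nature of $h$ on $[-1,1]$) applied to $f_{sc}(x)=\frac{2}{\pi}\sqrt{1-x^2}$ yields the density $f_Y(y) = \frac{\sqrt{4-y}}{2\pi\sqrt{y}}$ on $[0,4]$. Setting $\gamma=1$ in (\ref{eq:mp_law}) gives $a_\pm = 0,4$ and therefore $f_{MP}(y;1) = \frac{1}{2\pi y}\sqrt{(4-y)\,y} = \frac{\sqrt{4-y}}{2\pi\sqrt{y}}$, so the two densities match on $[0,4]$, concluding the argument.

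Alternatively, and more in keeping with the moment-based methodology of the paper, one verifies directly that $\beta_r(4\A_n^2) = 4^r \beta_{2r}(\A_n)$, whose expectation tends to $4^r\cdot C_r/2^{2r} = C_r$ by Proposition \ref{lem:main_exp}; this is exactly the $r$-th moment of the Marchenko--Pastur distribution at $\gamma=1$. Proposition \ref{lem:main_var} then supplies the $O(1/n^2)$ variance bound needed for the Chebyshev--Borel--Cantelli argument used in Corollary \ref{corol:conv}, upgrading convergence of expected moments to almost-sure weak convergence of the empirical spectral distributions. I do not anticipate any substantive obstacle here: the identity that the semicircle pushes forward to Marchenko--Pastur at $\gamma=1$ under $x\mapsto 4x^2$ is elementary and well known, and all the hard spectral analysis has already been carried out in Propositions \ref{lem:main_exp} and \ref{lem:main_var}.
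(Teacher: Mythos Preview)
Your second, moment-based argument is essentially the paper's own proof: the paper notes that $\beta_r(4\A_n^2)=\beta_{2r}(2\A_n)$, invokes Propositions \ref{lem:main_exp} and \ref{lem:main_var} to get almost-sure convergence of this to $C_r$, and identifies $C_r$ as the $r$-th moment of $f_{MP}(\,\cdot\,;1)$ (the paper does this last identification via the Narayana-number formula $\eta_r(\gamma)=\sum_k \gamma^k N(r,k)$ and the identity $\sum_k N(r,k)=C_r$, which you take for granted).

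Your first approach, via the continuous mapping theorem and a direct change-of-variables computation of the pushforward $h_*F_{sc}$, is a genuinely different and somewhat cleaner route that the paper does not take. It sidesteps the need to re-run the Chebyshev--Borel--Cantelli machinery, since almost-sure weak convergence of $F_{\A_n}$ is already in hand from Corollary \ref{cor:main} and the continuous map $x\mapsto 4x^2$ automatically transports it. The paper's moment route has the minor advantage of being self-contained within the method-of-moments framework used throughout, and of making the Catalan-number connection explicit; your pushforward route is shorter and requires no additional combinatorial identities.
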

\begin{proof}
The moments of the Marchenko-Pastur distribution are given by the formula
\begin{equation}
\label{eq:dc_mom}
\eta_r(\gamma) = \int x^r f_{MP}(x;\gamma) dx = \sum_{k=1}^r \gamma^k N(r,k),
\end{equation}
where
\begin{equation}
N(r,k) = \frac{1}{r}{k \choose r}{k-1 \choose r}
\end{equation}
are Narayana numbers. Use the identity
\begin{equation}
C_r = \sum_{k=1}^r N(r,k)
\end{equation}
relating Narayana and Catalan numbers to conclude that for $\gamma = 1$,
\begin{equation}
\label{eq:dc_mom}
\eta_r(1) = \sum_{k=1}^r N(r,k) = C_r.
\end{equation}

The $r$-th moment of $4\A_n^2$ is exactly the $2r$-th moment of $2\A_n$ and since the latter converges almost surely to the Catalan number $C_r$ as follows from Propositions \ref{lem:main_exp} and \ref{lem:main_var}, we get the desired statement.
\end{proof}

Similarly to Wigner's case, this surprising result suggests that even in cases where the measurements at hand (the columns of $\X_{m_n,n}$) are significantly dependent, the limiting distribution may still remain to be the Marchenko-Pastur law. In our current work, we aim at designing deterministic examples of matrices whose spectra converge to the Marchenko-Pastur distribution with arbitrary aspect ratio.

\section{Algorithmic Complexity of the Construction}
\label{sec:kolm}
The standard computer scientific approach to quantify the amount of randomness contained in a piece of data $D$, also known as its algorithmic compressibility, is based on calculating the length of a minimal program creating that data on a universal Turing machine. The length of the obtained binary code is referred to as the Kolmogorov complexity of the object and we denote it by $\mathcal{KC}(D)$. A more fair comparison of Kolmogorov complexities of various objects of the same size is usually done by conditioning on that size $\mathcal{KC}(D|\text{size})$, or in other words by assuming that it is already known to the machine \cite{cover2012elements}.

One may argue that measuring randomness of data samples through the concept of Kolmogorov complexity is not particularly useful since Kolmogorov complexity is not computable even using infinite computing resources. Indeed, it is a consequence of the fact that the halting problem for Turing machines is undecidable \cite{hopcroft2006automata}, that it is theoretically impossible, not only computationally infeasible, to check all possible generation algorithms for a given piece of data and to choose the shortest among them. To bypass this problem researchers and engineers often resort to the notion of linear complexity, which similarly to Kolmogorov complexity seeks for the shortest program, however, restricts the search to the class of LFSR-s. This restriction of the domain of search makes linear complexity easily computable thanks to the Berlekamp-Massey algorithm \cite{berlekamp1968algebraic,massey1969shift}, which can find the shortest LFSR generating a given binary string. Apparently, linear complexity always upper bounds the Kolmogorov complexity.

Since we focus on circulant matrices, the whole construction essentially reduces to the description of the first row. Due to equation (\ref{eq:def_a_eq}), the first row of a pseudo-random matrix at hand is constructed from a single Golomb sequence. Therefore, up to a constant length code describing the pattern of the matrix, the algorithmic complexity of a pseudo-random $n \times n$ matrix is that of a Golomb sequence $\varphi$ of length $n$. The linear complexity of a Golomb sequence is easily computable since by Lemma \ref{lem:lfsr_lem} they are exactly the binary $m$-sequences. To generate a Golomb sequence of length $n=2^m-1$ we need to specify the associated primitive binary polynomial of degree $m$, which requires at most $m-1$ bits (recall that its $m$-th and $0$-th coefficients are always $1$) and the initial state of the register, the so-called \emph{seed}, of $m$ bits. Overall, the linear complexity of a Golomb sequence is at most $2m-1 = 2\log_2 (n+1)-1$, which is asymptotically equivalent to
\begin{equation}
2\log_2 (n+1)-1 \asymp 2\log_2 n.
\end{equation}
Thus, we have proven the following result.
\begin{prop}
The Kolmogorov complexity of the pseudo-random matrices from (\ref{eq:def_a_eq}) is bounded by
\begin{multline}
\mathcal{KC}(\A_n|n) = \mathcal{KC}(\varphi|n) +c \\ \leqslant 2\log_2 (n+1) - 1 + c \asymp 2\log_2 n.
\end{multline}
\end{prop}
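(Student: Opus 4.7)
The plan is to exploit the circulant structure of $\A_n$ to reduce the description cost to that of a single Golomb sequence, then bound the latter via its linear complexity. First I would note that since $\A_n$ is circulant and its entries are determined by (\ref{eq:def_a_eq}), any Turing machine that, given $n$, can reconstruct the Golomb sequence $\varphi$ of length $n$ can also reconstruct $\A_n$ by applying the fixed formula $a_{ij} = \frac{1}{2\sqrt{n}}(-1)^{\varphi(i-j)+\varphi(j-i)}$. The code implementing this formula has constant length $c$ independent of $n$, so $\mathcal{KC}(\A_n\mid n) \leqslant \mathcal{KC}(\varphi\mid n) + c$.

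Next I would bound $\mathcal{KC}(\varphi\mid n)$ by the linear complexity of $\varphi$, which is computable and always upper bounds the Kolmogorov complexity (any LFSR description is a legitimate Turing program). By Lemma \ref{lem:lfsr_lem}, a Golomb sequence is exactly a binary $m$-sequence generated by an LFSR of length $m = \log_2(n+1)$. To describe such a generator it suffices to specify (i) the primitive polynomial $f(x)$ of degree $m$, and (ii) the initial state (seed) of the register. By (\ref{eq:rec_rel}), the coefficients $f_0$ and $f_m$ of a primitive polynomial are forced to equal $1$, so only the $m-1$ middle coefficients need to be written down, costing $m-1$ bits. The nonzero seed costs $m$ bits. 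Adding these contributions gives a total of $2m-1$ bits.

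Substituting $m = \log_2(n+1)$ yields the claimed bound $\mathcal{KC}(\varphi\mid n) \leqslant 2\log_2(n+1) - 1$, and the asymptotic equivalence $2\log_2(n+1)-1 \asymp 2\log_2 n$ is immediate. Combining with the reduction from the first paragraph gives the statement of the proposition.

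I do not anticipate a serious obstacle here: every step is essentially bookkeeping. The only subtlety worth flagging is that we are implicitly using the universality of the reference Turing machine to absorb the fixed decoding routine (the map $\varphi \mapsto \A_n$, including the LFSR simulator and the sign-mapping $\zeta$) into the additive constant $c$; this is standard and is precisely the reason Kolmogorov complexity is defined only up to an additive constant.
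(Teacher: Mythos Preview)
Your proposal is correct and follows essentially the same argument as the paper: reduce the description of the circulant matrix to that of the underlying Golomb sequence at the cost of an additive constant, then bound the latter by its linear complexity, counting $m-1$ bits for the primitive polynomial (since $f_0=f_m=1$) plus $m$ bits for the seed to obtain $2m-1=2\log_2(n+1)-1\asymp 2\log_2 n$. The only cosmetic difference is that the paper writes the first relation as an equality (up to a constant) rather than an inequality, but for the upper bound in the proposition your $\leqslant$ is exactly what is needed.
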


Summarizing, Corollary \ref{cor:main} demonstrates that there exist matrices with empirical spectra converging to the semicircular law whose Kolmogorov complexity is asymptotically as low as $2\log_2 n$. To get an intuition of how small this value is, it is instructive to compare it with the following classical result.
\begin{lemma}[\cite{li2009introduction}]
At most $\frac{n}{2^n}$ fraction of all binary strings of length $n$ have conditional Kolmogorov complexity less than $\log_2 n$.
\end{lemma}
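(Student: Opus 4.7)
The plan is a direct counting (pigeonhole) argument comparing the number of short programs against the $2^n$ binary strings of length $n$.

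First, I would bound the number of binary programs of length strictly less than $\log_2 n$. Since a program of length $\ell$ is itself a binary string of length $\ell$, the total count is
\begin{equation}
\sum_{\ell < \log_2 n} 2^\ell \leqslant 2^{\lfloor \log_2 n \rfloor + 1} - 1 = O(n),
\end{equation}
and in the clean case that $n$ is a power of two this reduces to $2^{\log_2 n} - 1 = n-1$. Up to the absorbed constant, at most $n$ distinct program descriptions of length below $\log_2 n$ exist.

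Second, I would invoke the determinism of the reference universal Turing machine: each halting program outputs a single string, and non-halting programs contribute nothing. Therefore the number of distinct binary strings of length $n$ that can be output by a program of length strictly less than $\log_2 n$ is upper bounded by the number of such programs. Together with the first step, this gives at most $n$ strings of length $n$ with $\mathcal{KC}(\cdot \mid n) < \log_2 n$; dividing by $2^n$ yields the advertised fraction $n/2^n$.

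The argument presents no real obstacle: it rests only on the definition of $\mathcal{KC}(\cdot \mid n)$ and on the deterministic nature of the machine, and it is essentially a one-line pigeonhole bound. Conditioning on $n$ leaves the enumeration of short programs unchanged; it merely relieves us of paying $\Theta(\log n)$ bits to describe $n$ inside the program code itself. The only mild care point is handling non-integer values of $\log_2 n$, which is absorbed into the floor in the bound above, so no substantive difficulty arises.
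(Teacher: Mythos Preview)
Your argument is the standard counting/pigeonhole proof and is correct. The paper itself does not supply a proof of this lemma: it is quoted as a known fact from \cite{li2009introduction} and used only to contextualize the $2\log_2 n$ complexity bound, so there is no in-paper proof to compare against. Your proposal is exactly the textbook argument from that reference. The only cosmetic point is that for non-powers of two your displayed bound $2^{\lfloor \log_2 n\rfloor+1}-1$ can be as large as $2n-1$, so the constant in the stated fraction $n/2^n$ should be read up to the usual $O(1)$ slack inherent in the definition of Kolmogorov complexity; this does not affect the substance.
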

This allows us to claim that the randomness requirements for the semicircular law are comparatively small. In Section \ref{sec:pseudo-Wigner_matr}, we compare the proposed construction to pseudo-Wigner matrices defined in \cite{soloveychik2017pseudo}.

It is important to emphasize that the Kolmogorov complexity alone does not entirely capture the power of a sequence of matrices to approximate the limiting spectral law. Indeed, given a sequence of matrices we can always \textit{dilute} it by inserting other matrices between the elements to get a sequence with lower algorithmic complexity but worse convergence properties. Therefore, a more objective comparison should consider the Kolmogorov complexity versus the speed of convergence of the empirical measure to the limiting law in some metric. For example, let us consider the variance of the approximation as such measure of error. As justified in Remarks \ref{rem:better_conv_var} and \ref{rem:better_conv_var_pr} in Appendix \ref{app:proof_main_res}, in both Wigner's original case and our pseudo-random construction, the decay of the variance is of the same order $\frac{1}{n^2}$ as a function of the size. However, the Kolmogorov complexity of a typical matrix from Wigner's ensemble is equal to $\frac{n(n+1)}{2}$ up to an additive constant \cite{li2009introduction}, whereas the complexity of our construction is at most $2\log_2 n$.

\section{Numerical Simulations and Related Models}
\label{sec:num_res}
In this section we justify the above construction numerically and compare it to related constructions, both random and pseudo-random.

\subsection{Numerical Experiments}
\label{sec:num_constr}
In our first numerical experiment we took two Golomb sequences for $m=13\; (n=8191)$ and $m=14\; (n=16383)$ generated through relation (\ref{eq:gol_seq_gen}) by the primitive binary polynomials $f_{13}(x) = x^{13}+x^8+x^5+x^3+1$ and $f_{14}(x) = x^{14}+x^{12}+x^{11}+x+1$, correspondingly \cite{vzivkovic1994table}, with the initial seed being a sequence $[1,\dots,1]$ of $m$ ones. The matrices in this simulation were built from $100$ random shifts of the obtained sequences through formula (\ref{eq:def_a_eq}). Figures \ref{fig:dens_13} and \ref{fig:dens_14} show the empirical density functions of the spectra of the matrices. For convenience, we also plot the standard semicircular density and the average empirical distributions on the same graphs. The pictures clearly demonstrate that the empirical distributions and their averages become closer to the limiting semicircular shape when the sizes of matrices grow. In particular, we can observe that the width of the blue region shrinks, in accordance with the theoretically predicted by Proposition \ref{lem:main_var} dependence of the variance on the dimension, which implies that the standard deviations of the moments should be inverse proportional to $n$.

\subsection{Circulant Random Matrices}
By the very construction, our pseudo-random matrices have circulant pattern, therefore, probably the closest relative of the model at hand among truly random matrices is the ensemble of symmetric circulant random matrices with i.i.d. (up to symmetry constraints) Rademacher entries. The authors of \cite{bose2002limiting} demonstrated that the empirical spectra of such matrices after appropriate scaling converge almost surely weakly, as the sizes of matrices grow to infinity, to the normal distribution. Given this result and the fact that the Golomb sequences are commonly considered and used as substitutes for truly random Rademacher sequences, it is quite mysterious that the spectra of our pseudo-random matrices converge to a different distribution, and even more surprisingly the latter is the semicircular distribution having bounded support. To illustrate the phenomenon, in Figure \ref{fig:dens_tt} we plot the empirical spectra of $100$ symmetric truly random circulant matrices with $n=4000$ Rademacher entries scaled by the factor of $\frac{1}{2\sqrt{n}}$, their average spectrum, and the limiting spectrum. 

\label{sec:nsim}
\begin{figure}[!t]
\centering
\includegraphics[width=3.75in]{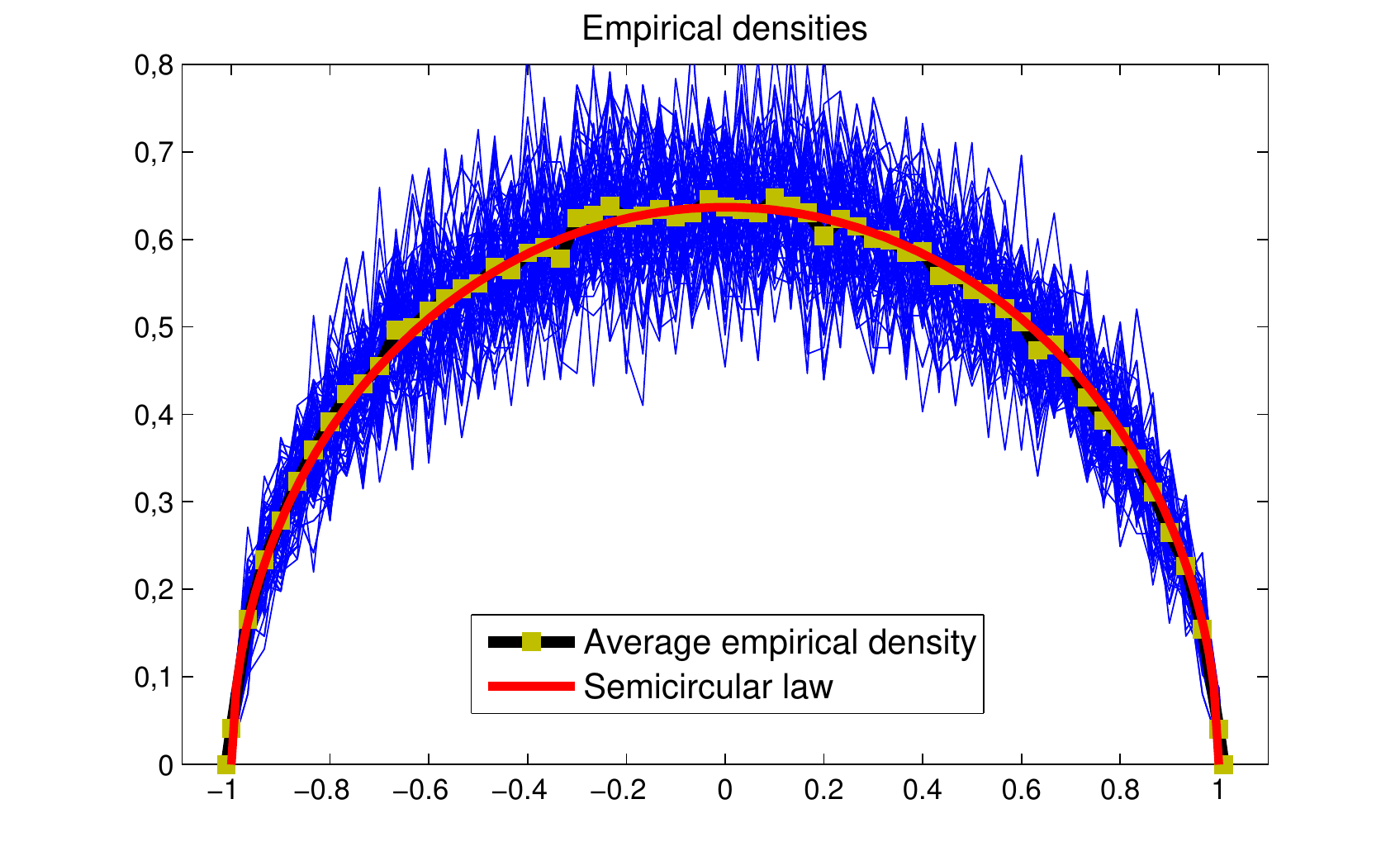}
\vspace{-0.6cm}
\caption{Empirical spectral densities of $100$ pseudo-random matrices, $m=13\; (n=8191)$.}
\label{fig:dens_13}
\vspace{0.5cm}
\includegraphics[width=3.75in]{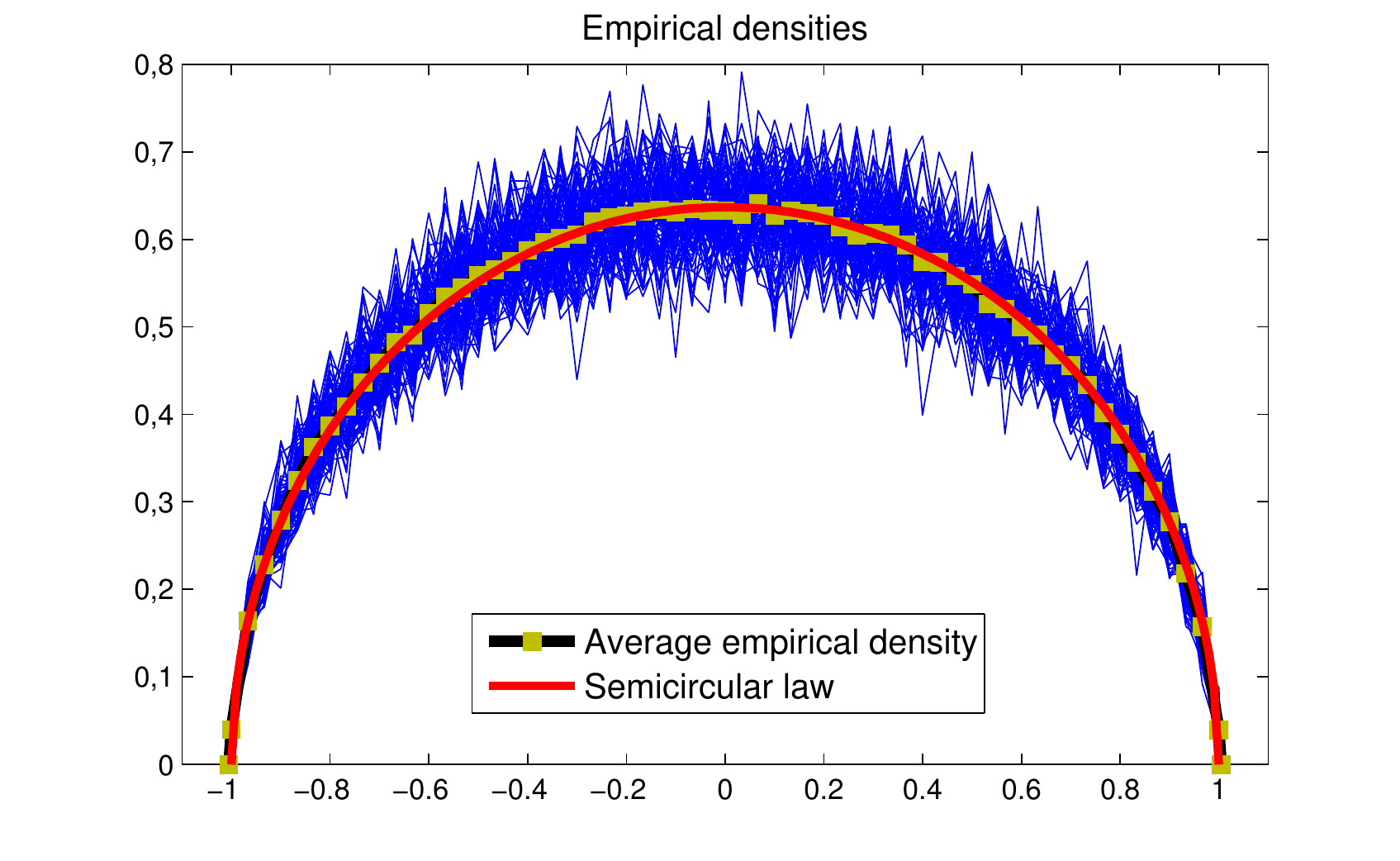}
\vspace{-0.6cm}
\caption{Empirical spectral densities of $100$ pseudo-random matrices, $m=14\; (n=16383)$.}
\label{fig:dens_14}
\end{figure}

\begin{figure}[!t]
\centering
\includegraphics[width=3.75in]{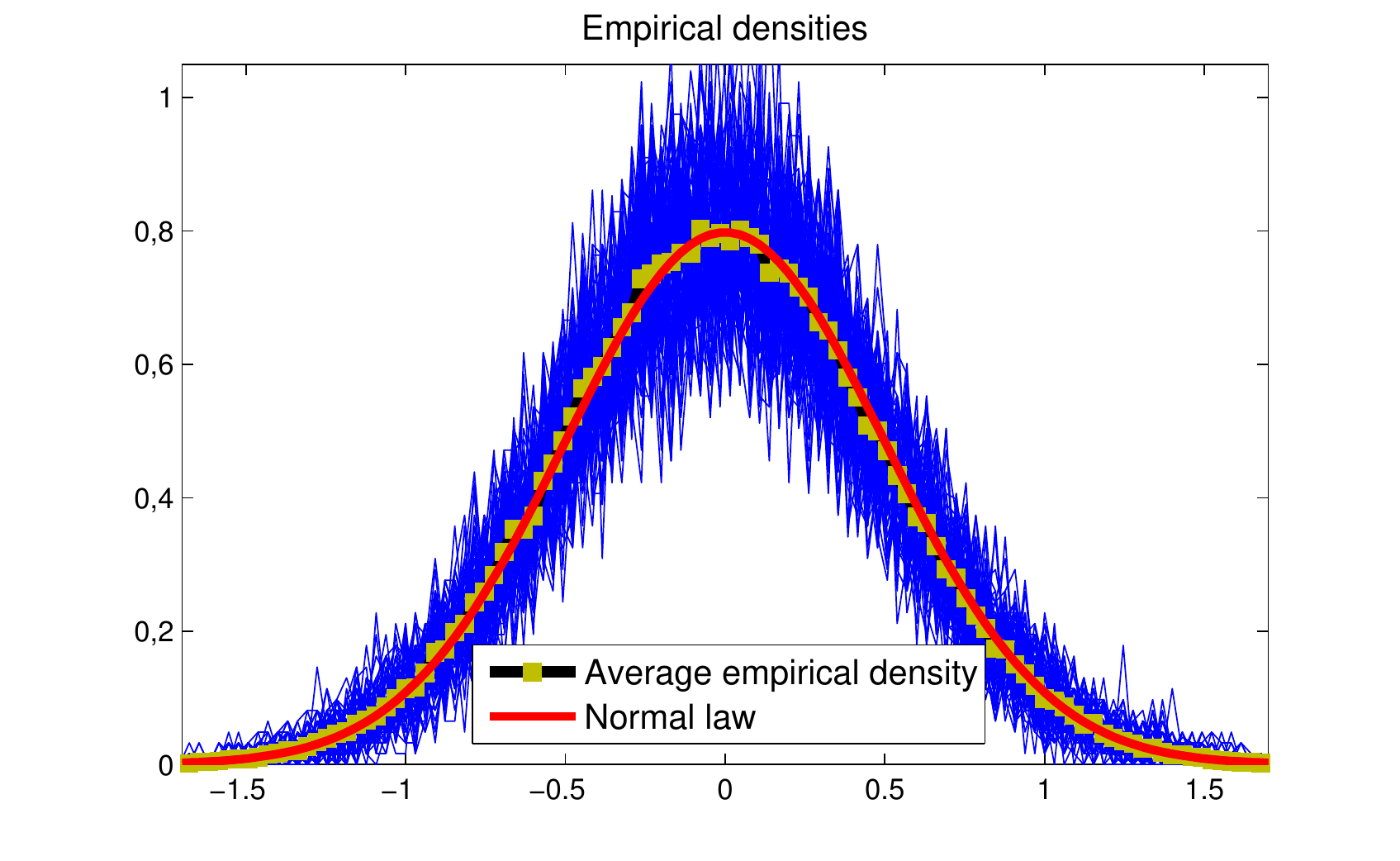}
\vspace{-0.6cm}
\caption{Empirical spectral densities of $100$ random circulant matrices, $n=4000$.}
\label{fig:dens_tt}
\vspace{0.5cm}
\includegraphics[width=3.75in]{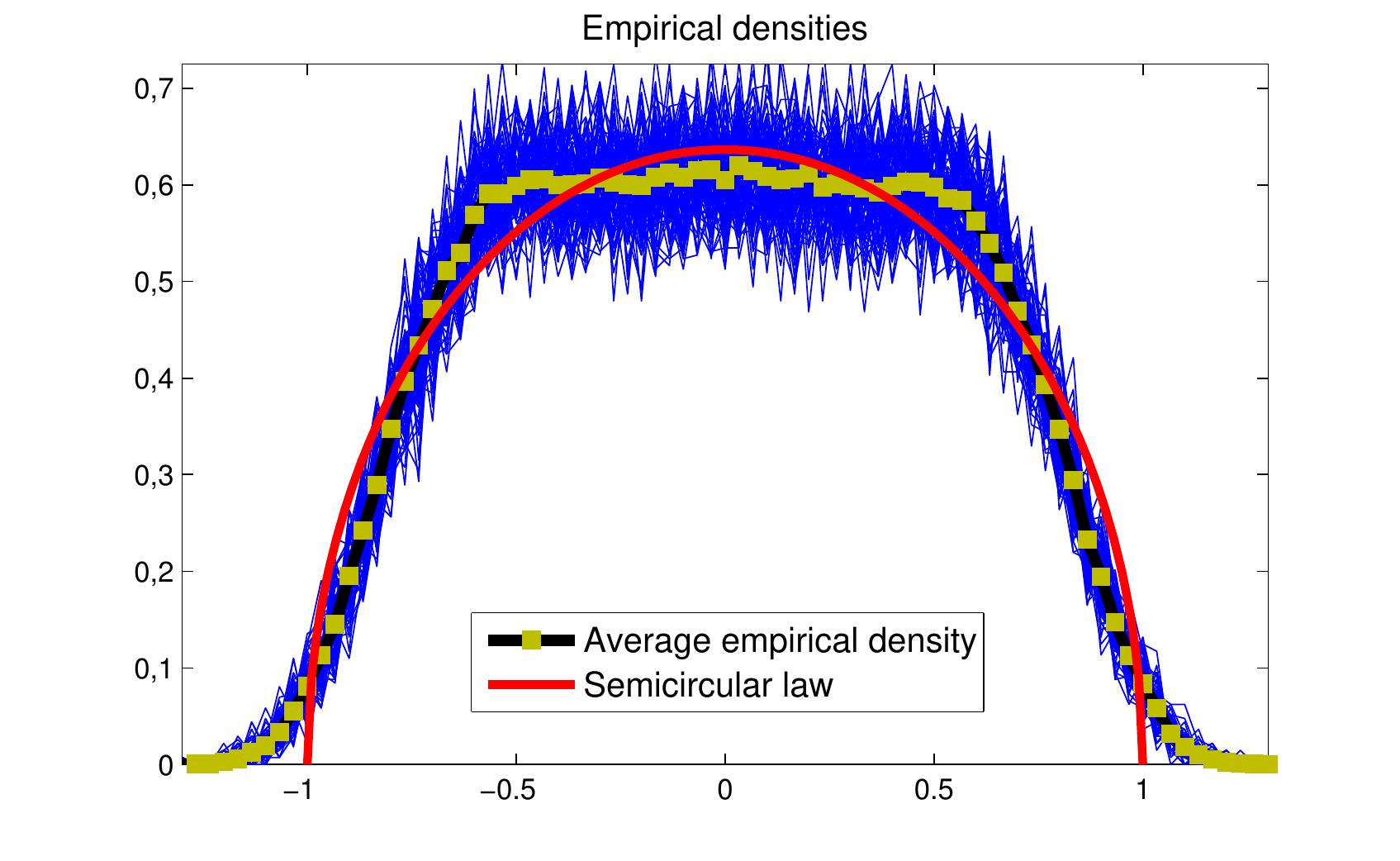}
\vspace{-0.6cm}
\caption{Empirical spectral densities of $100$ matrices $\D_n$ defined in (\ref{eq:dn_def}), $m=13\; (n=8191)$.}
\label{fig:dens_II13}
\end{figure} 

The above observation suggests that the Golomb sequences combined with the circulant pattern produce semicircle law instead of the anticipated normal distribution. This means that the intrinsic structure of the Golomb sequences is somehow related to the circulant structure. One of the current directions of our research is to reveal this connection and to try to use if for the construction of matrix ensembles with pre-designed limiting spectrum, which we called an \emph{inverse spectral problem} in the Introduction.

\begin{figure*}[!t]
\centering
\includegraphics[height=7.8cm]{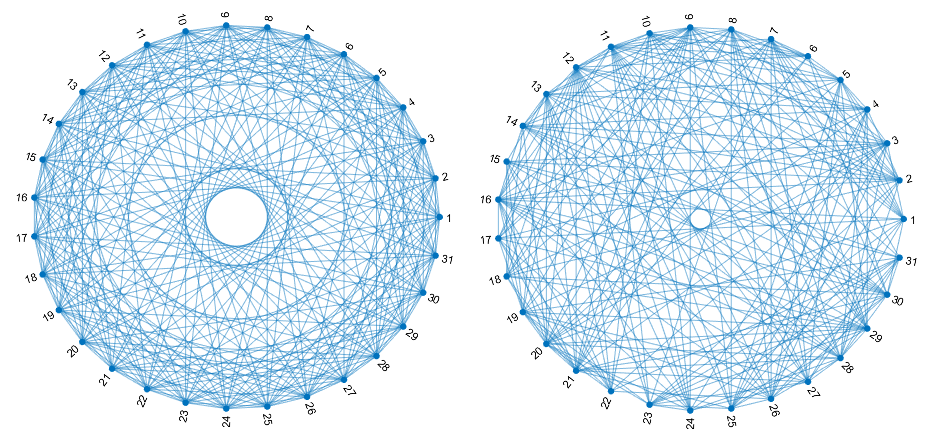}
\caption{A graph associated to the pseudo-random matrix vs a truly random graph, $m=5\;\;\; (n=31)$.}
\label{fig:rand_vs_prand}
\end{figure*} 

\subsection{Tridiagonal Semicircular Matrices}
Another known matricial model with semicircular limiting spectral shape and comparatively low algorithmic complexity is the ``tridiagonalized'' $\beta$-Hermite (Gaussian) ensemble introduced in \cite{dumitriu2002matrix}, with $\beta=1$. As suggested by the name, the matrices from this ensemble are tridiagonal and have spectral distributions identical to those of the matrices from the Gaussian orthogonally invariant ensemble, whose limiting law is semicircular. In the construction proposed by the authors of \cite{dumitriu2002matrix}, the entries on the main diagonal are i.i.d. centered normal with variance $2$ and on the first (upper) subdiagonal - i.i.d. $\chi^2(n-1)$ random variables. The spectra of such matrices scaled by $\frac{1}{2\sqrt{n}}$ converge to the semicircular law and the Kolmogorov complexity of this construction is proportional to $n$ (if the non-zero entries are stored with fixed precision). 

Importantly, the ``tridiagonalized'' Gaussian model does not possess the universality property (meaning that the distributions chosen for the non-zero entries cannot be replaced by any other laws possessing the same first moments) and does not allow a Rademacher analog with sign entries. It is yet an open question whether narrow-banded matrices with sign entries may have spectra converging to the semicircular law.

\subsection{Pseudo-Wigner Matrices}
\label{sec:pseudo-Wigner_matr}
Both previous examples treated constructions in which the (non-zero) entries of the matrices were generated independently. An example of pseudo-random construction proposed in \cite{soloveychik2017pseudo} suggests to build matrices from the codewords of dual BCH codes with appropriately chosen designed minimum distances of the original BCH codes. The purpose of the specific tuning of the minimum distance there is to make the first expected moments of the constructed matrices match those of the truly random Wigner matrices. The suggested technique explains the ``pseudo-Wigner'' title of the resulting ensembles. As shown in \cite{soloveychik2017pseudo}, when the number of matching moments grows at a specific rate with the sizes of matrices, the empirical spectra of the individual matrices from the pseudo-Wigner ensemble converge with high probability to the semicircle. 

There are a few significant differences between the pseudo-Wigner model \cite{soloveychik2017pseudo} and our approach in this paper. First, we guarantee convergence almost surely, which in practice means that any matrix from the pseudo-random ensemble converges to the semicircle, unlike the pseudo-Wigner matrices, where the convergence is in probability. Another important discrepancy is that the generating process of the dual BCH codes is somewhat more involved and the cardinality of the resulting ensemble is much larger forcing the Kolmogorov complexity of a single element to be higher. The Kolmogorov complexity of an instance from the pseudo-Wigner ensemble in \cite{soloveychik2017pseudo} is bounded from above by $\frac{2}{\varepsilon}\log_2 n$, where $\varepsilon > 0$ is a precision parameter. Since we want to have convergence, we need to decrease $\varepsilon = \varepsilon(n)$ when $n$ grows, which means that the ratio 
\begin{equation}
\frac{\frac{2}{\varepsilon(n)}\log_2 n}{2 \log_2 n} = \frac{1}{\varepsilon(n)}
\end{equation}
must converge to infinity and hence, the complexity of the pseudo-Wigner matrices is higher than that of matrices built here from the Golomb sequences.

An interesting insight on the difference between the Kolmogorov complexities of these two constructions can be obtained from the following observation. The construction presented in this article leads to the circulant pattern of the pseudo-random matrices. As mentioned above, this means that for a fixed $n$, the eigenbasis of the family of matrices at hand does not depend on the choice of the underlying Golomb sequence $\varphi$ and on the specific shift $a$. Therefore, the whole amount of randomness possessed by the model is spent on the construction of the spectrum, unlike the pseudo-Wigner matrices, where some amount of \emph{randomness} is consumed by the not-aligned eigenvectors. This is a purely intuitive understanding which we would not make precise in the current paper but rather postpone to our future studies.

\subsection{A Related Family of Golomb Matrices}
Yet, another natural and similar way to fill a circulant matrix using a Golomb sequence $\varphi$ is as follows. Let a real symmetric matrix $\D_n$ have the form
\begin{equation}
\label{eq:dn_def}
\D_n = \bigg\{\frac{1}{2\sqrt{n}}(-1)^{\varphi(|i-j|)}\bigg\}_{i,j=0}^{n-1},
\end{equation}
or
\begin{equation}
\D_n = \frac{1}{2\sqrt{n}}\zeta(\T),
\end{equation}
with
{\small \begin{equation*}
\T = \begin{pmatrix} 
\varphi(0) & \varphi(1) & \varphi(2) & \dots & \varphi(n-1) \\
\varphi(1) & \varphi(0) & \varphi(1) & \dots & \varphi(n-2) \\
\varphi(2) & \varphi(1) & \varphi(0) & \dots & \varphi(n-3) \\
\vdots & \vdots & \vdots & \ddots & \vdots \\
\varphi(n-1) & \varphi(n-2) & \varphi(n-3) & \dots & \varphi(0) \\
\end{pmatrix}.
\end{equation*}}
In spite of the fact that this construction resembles the one of Section \ref{sec:def}, the limiting spectral law of such matrices is not semicircular, as Figure \ref{fig:dens_II13} demonstrates. For this simulation we took $100$ random shifts of the same sequence as in Section \ref{sec:num_constr} above. This example demonstrates that when the sequence $\varphi$ is reflected in (\ref{eq:def_a_eq}) and added to itself it has a significant influence on the outcome as it couples with the circulant structure to produce the semicircular spectral density.

\subsection{Relation to Some Pseudo-random Graphs}
As we have already mentioned in the Introduction, symmetric binary or sign matrices naturally correspond to unweighted, undirected graphs. Let us start by comparing our pseudo-random construction to the truly random graphs. In Figure \ref{fig:rand_vs_prand} we drew a pseudo-random graph on $n=2^5-1=31$ vertices (it was generated using $f_{5}(x) = x^5+x^2+1$ with the initial seed $[1 1 0 1 0]$) versus a truly random graph on the same number of vertices. The vertices are arranged on a circular diagram to emphasize the circulant pattern of the pseudo-random construction which is invariant under the rotations by a multiple of $\frac{2\pi}{31}$ radians shifting the vertices cyclically. It is easy to see that the second graph does not possess such rotational symmetry with the given arrangement of vertices. In fact, any arrangement of vertices in a truly random graph will likely not show any invariance to permutation of vertices, since it can be proven that for $n \to \infty$ a truly random graph has almost surely no nontrivial automorphisms.

It is also instructive to compare the matrices at hand to some known pseudo-random graphs (note that \textit{pseudo-random graphs} is a reserved title for a specific family of graphs; for this reason we do not apply this terminology to our pseudo-random \textit{matrices}). Such graphs are often used to replace truly random graphs in various algorithms as a means of derandomization \cite{alon2004probabilistic}, since they resemble the truly random graphs in some of their properties and are easy to generate and access. 

One of the standard and probably most well studied examples of pseudo-random graphs is the Paley graph. Let $q$ be a prime power which is congruent to $1$ modulo $4$ so that $-1$ is a square in the finite field $GF(q)$. The Paley graph is build in the following way. Its vertices are all elements of $GF(q)$ and two vertices are adjacent if their difference is a quadratic residue in this field \cite{brouwer2011spectra, krivelevich2006pseudo}. As Figure \ref{fig:paley} demonstrates, the empirical spectra of the adjacency matrices of the Paley graph (lifted by $\zeta$ defined in (\ref{eq:zeta_def})) consist of three spikes (see Proposition 9.1.1 from \cite{brouwer2011spectra} for the proof). This in particular shows that the spectra of Paley graphs can not approach any smooth limiting law let alone semicircle. 
\begin{figure}
\centering
\includegraphics[width=3.75in]{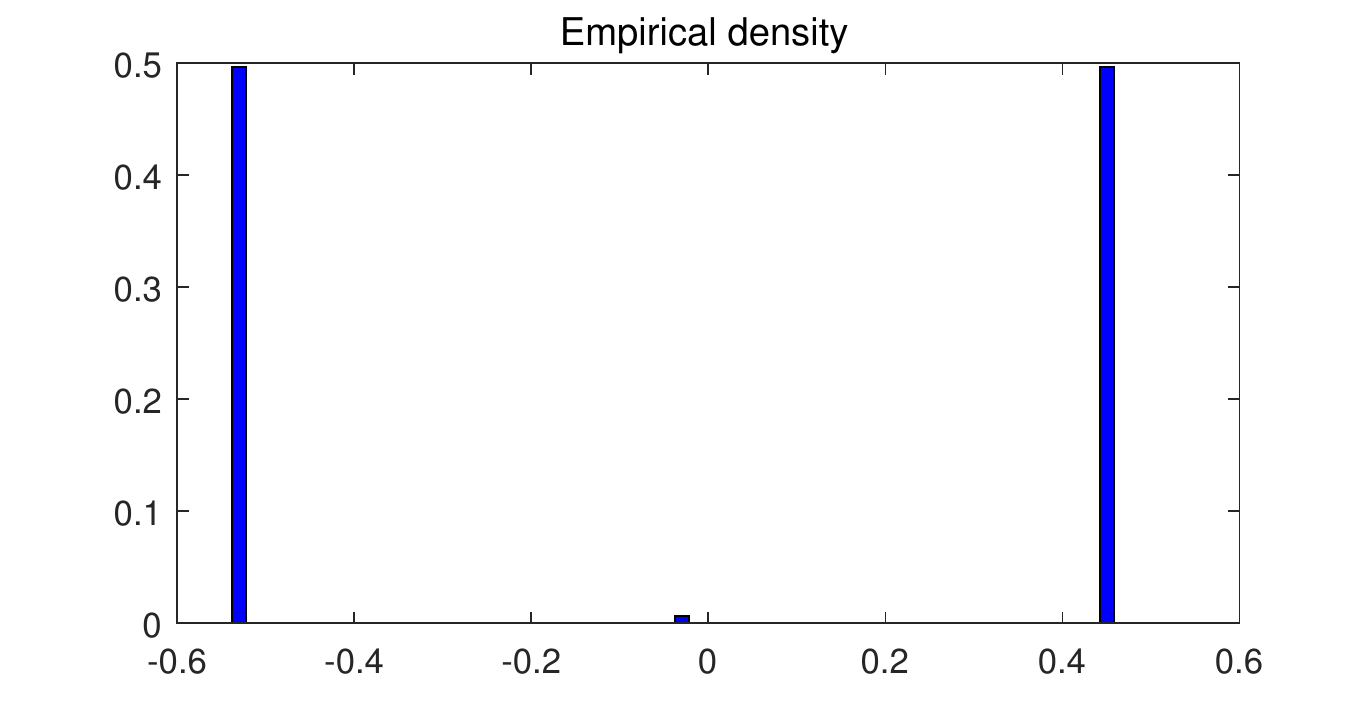}
\caption{Empirical spectral density of the Paley graph, $n=157$.}
\label{fig:paley}
\end{figure} 

\begin{figure*}[!t]
\hspace{-2.3cm}
\includegraphics[width=8.6in]{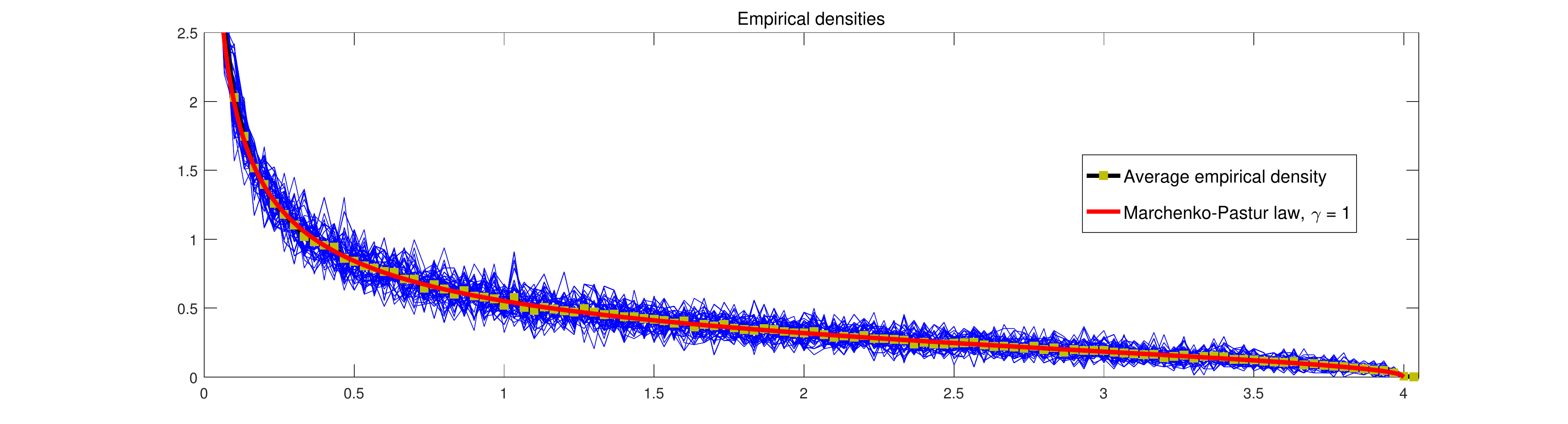}
\caption{Empirical spectral densities of $100$ matrices $\A_n^2(a)$, $m=13\;\;\; (n=8191)$.}
\label{fig:mp_law}
\end{figure*} 

\subsection{The Marchenko-Pastur Law}
As proven in Section \ref{sec:mp_subseq}, the empirical spectra of squares of the symmetric pseudo-random matrices at hand converge to the Marchenko-Pastur law with $\gamma=1$. Figure \ref{fig:mp_law} compares the empirical distributions of eigenvalues of $4\A_n^2$ and their average to the Marchenko-Pastur distribution with the same aspect ratio (\ref{eq:mp_law}).

\bigskip
All the phenomena mentioned in this section require further investigation and will hopefully lead to better understanding of the \emph{inverse spectral problem} for sign matrices.

\section{Conclusions}
\label{sec:conc}
In this article, we consider the problem of generating pseudo-random matrices based on the similarity of their spectra to Wigner's semicircular law. Using binary $m$-sequences, we give a simple explicit construction of a family of $n \times n$ circulant matrices whose spectra converge to the semicircular law. We show that the Kolmogorov complexity of the suggested construction is as low as $2\log_2(n)$. We compare our pseudo-random matrices with other random and pseudo-random matrix models and outline the directions of our current and future research. In addition, we propose pseudo-random matrices of the same algorithmic complexity with spectra converging to the Marchenko-Pastur law with the aspect ratio one and currently work on extending this construction to general aspect ratios.

\appendix
The goal of this section is to prove Propositions \ref{lem:main_exp} and \ref{lem:main_var}. We start from auxiliary notation and claims.

\subsection{Spectra of Subcodes of Algebraic-geometric Codes}
\label{app:spectr_codes}
In this section we formulate a technical result from the theory of algebraic-geometric codes that we use in the proof later. Due to the lack of space we cannot provide an extensive introduction into the topic and refer the interested reader to the paper \cite{vladuts1991spectra} and the book \cite{tsfasman2013algebraic}.

\begin{definition}
\label{def:weight_func}
The weight of a binary $(0/1)$ sequence is the number of ones in it. Given a binary linear code $\mathcal{B}$, denote by $\pi_{\mathcal{B}}(l)$ its spectral function, that is the number of codewords from $\mathcal{B}$ with weight $l$.
\end{definition}

Let $\mathcal{X}$ be a smooth projective absolutely irreducible algebraic curve of genus $g$ over $GF(q),\; q=p^h,\; p$ is prime. $\mathcal{X}(GF(q))$ denotes the set of $GF(q)$-points on it. By $K$ we denote the algebraic closure of $GF(q)$. $GF(q)(\mathcal{X})$ denotes the field of rational curves on $\mathcal{X}$, defined over $GF(q)$. Let $G = \sum_{i=1}^s a_iP_i$ be a divisor over $\mathcal{X}$, where $P_1,\dots,P_s$ are points on $\mathcal{X}$ defined in general over $K$. We shall assume that the divisor itself is defined over $GF(q)$, i.e. it is mapped into itself by a transformation raising the coordinates of all the points into the power of $q$. The set $\suppp{P_1,\dots,P_s}$ is referred to as the support of $G$ and we shall denote its cardinality by $s(G)$. Let $\{Q_1,\dots,Q_n\} = \mathcal{X}(GF(q))\backslash \suppp{G}$ and denote by $t=|\mathcal{X}(GF(q)) \cap \suppp{G}|$ the number of $GF(q)$-points in the support of $G$. For any function $f \in k(\mathcal{X})^*$, assemble the divisor of poles $(f)_{\infty}$ and the divisor of zeros $(f)_0$. These are effective divisors equal to the sum of $K$-points $\mathcal{X}$ where $f$ has a pole or a zero with their respective multiplicities. The divisor of $f$ is $(f) = (f)_0 - (f)_\infty$. Given $G$, build the space of functions $L(G) = \{f \in GF(q)(\mathcal{X})^* | (f) + G \geqslant 0\} \cup \{0\}$. Every divisor can be decomposed as $G = G_{+}-G_{-}$, where $G_{+}$ and $G_{-}$ are both effective divisors with disjoint supports. Denote $D=\sum_{i=1}^n Q_i$, then the algebraic-geometric code $\Gamma(D,G)$ is defined as the $q$-ary code of length $n$ with the parity-check matrix $\norm{f_i(Q_j)}$, where $f_i$ runs over the basis of $L(G)$. We also introduce the $p$-ary code $S(D,G) = \Gamma(D,G) \cap GF^n(p)$ which is a subcode of $\Gamma(D,G)$ over the subfield $GF(p)$. 

Let $G=G_{+}-P,\; P \in \mathcal{X}(GF(q))$. If $G_{+} = \sum_{i=1}^m b_iP_i,\; b_i > 0,\; i=1,\dots,m$, then set $G_i = \sum_{i=1}^m\[b_i/p\]P_i - P$, where $[x]$ is the integer part of $x\in \mathbb{R}$ and denote $\alpha=\dim{L(G)} - \dim{L(G_1)}$.

\begin{lemma}[Theorem 7.2 from \cite{vladuts1991spectra}]
\label{lemma:code_lengths_orig}
Let $p=2,\; G=G_{+}-P,\; n \geqslant \max[11, 2 \deg(G)]$. The spectrum of the code $\mathcal{B} = S(D,G)$ satisfies
\begin{multline}
\left|\pi_{\mathcal{B}}(l) - {n \choose l}2^{-h\alpha}\right| \leqslant \frac{n^{l/2}}{2}\exp\big(2n^{-1/2}(t(G) \\+(2g-2+s(G_{+})+\deg(G_{+}))2^{h/2})\big),
\end{multline}
where $1 \leqslant l \leqslant \[\frac{n}{2}\]$. If $n > 2((2g-2+s(G_{+})+\deg(G_{+}))2^{h/2}+t(G))$, then $n - \dim{S(D,G)} = h\alpha$.
\end{lemma}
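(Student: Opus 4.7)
The plan is to prove Lemma \ref{lemma:code_lengths_orig} via character-sum estimates for algebraic-geometric codes, combining Weil's bound on curve character sums with a careful bookkeeping of the binary subcode structure. Throughout, let $\chi$ denote the nontrivial additive character of $GF(2)$.

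First, I would write the spectral function as a Fourier expansion over $GF(2)^n$. For $c \in \{0,1\}^n$ the weight indicator factors as
\begin{equation*}
\mathbf{1}_{\mathrm{wt}(c)=l} = \sum_{|S|=l}\prod_{i\in S}\frac{1-\chi(c_i)}{2}\prod_{i\notin S}\frac{1+\chi(c_i)}{2},
\end{equation*}
so that expanding the products gives
\begin{equation*}
\pi_{\mathcal{B}}(l) = \frac{1}{2^n}\sum_{|S|=l}\sum_{T\subseteq[n]}(-1)^{|T\cap S|}\sum_{c\in\mathcal{B}}\chi\!\Big(\sum_{i\in T}c_i\Big).
\end{equation*}
The $T=\varnothing$ contribution equals $\binom{n}{l}|\mathcal{B}|/2^n$, and, invoking the second assertion of the lemma ($n-\dim S(D,G)=h\alpha$, established in the final step), this is precisely the leading term $\binom{n}{l}\,2^{-h\alpha}$.

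Second, and this is the technical heart of the argument, I would bound the inner character sum for $T\ne\varnothing$ by converting it into a character sum over $\mathcal{X}(GF(q))$. Each codeword of $\mathcal{B}$ has the form $(f(Q_1),\ldots,f(Q_n))$ for $f\in L(G)$ whose values lie in $GF(2)$; aggregating over $\mathcal{B}$ and applying the trace duality on $GF(q)/GF(2)$ rewrites $\sum_{c\in\mathcal{B}}\chi\bigl(\sum_{i\in T}c_i\bigr)$ as a sum of curve character sums $\sum_{P\in\mathcal{X}(GF(q))\setminus\mathrm{supp}(G)}\psi(F_{T}(P))$ for appropriate rational functions $F_T$ on $\mathcal{X}$. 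Weil's theorem then yields, for each nontrivial $\psi$,
\begin{equation*}
\Big|\sum_{P\in\mathcal{X}(GF(q))}\psi(F_T(P))\Big| \leqslant \bigl(2g-2+\deg(G_+)+s(G_+)\bigr)\,2^{h/2},
\end{equation*}
while the $GF(q)$-points of $\mathcal{X}$ lying in $\mathrm{supp}(G)$ contribute the extra additive term $t(G)$.

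Third, I would assemble the global bound. After interchanging the orders of summation and grouping all $T\ne\varnothing$ contributions, the trick $\prod_i(1+a_i)\leqslant\exp(\sum_i a_i)$ compacts the product of per-coordinate error contributions into a single exponential, and $\binom{n}{l}\leqslant n^l/l!$ extracts the $n^{l/2}/2$ prefactor; combining these produces the stated error
\begin{equation*}
\frac{n^{l/2}}{2}\exp\!\Bigl(2n^{-1/2}\bigl(t(G)+(2g-2+s(G_+)+\deg(G_+))\,2^{h/2}\bigr)\Bigr).
\end{equation*}
For the dimension identity $n-\dim S(D,G)=h\alpha$, I would analyse the evaluation map $\mathrm{ev}:L(G)\to GF(q)^n$ restricted to the Frobenius-stable subspace of $GF(2)$-valued functions: under the strict inequality $n>2((2g-2+s(G_+)+\deg(G_+))2^{h/2}+t(G))$, the injectivity needed to equate the codimension with $h\alpha=h(\dim L(G)-\dim L(G_1))$ follows from the same Weil-type bound applied to Frobenius differences of elements of $L(G)$.

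The principal obstacle is step two: honestly identifying $\sum_{c\in\mathcal{B}}\chi\bigl(\sum_{i\in T}c_i\bigr)$ with a \emph{single} curve character sum accessible to Weil. The binary subcode condition selects functions in $L(G)$ obeying Frobenius relations at the evaluation points, and the divisor $G_1=\sum[b_i/p]P_i-P$ that enters $\alpha$ is exactly the pole-order correction forced by this Frobenius constraint; understanding why the main term carries the factor $2^{-h\alpha}$ (rather than $2^{-h\dim L(G)}$) is inseparable from this identification. Matching the combinatorial constants so as to recover the exponential factor with the precise leading coefficient $2n^{-1/2}$ is the other delicate point in the bookkeeping.
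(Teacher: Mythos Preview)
The paper does not prove this lemma at all: it is quoted as Theorem~7.2 from \cite{vladuts1991spectra} and used as a black box. There is therefore no ``paper's own proof'' to compare your proposal against.

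That said, your outline follows the standard and correct strategy for results of this type---expressing the weight enumerator via additive characters, reducing the nontrivial character contributions to exponential sums over the curve, and invoking the Weil bound---which is indeed the machinery behind the original result in \cite{vladuts1991spectra}. You have also correctly identified the genuinely hard step: passing from the binary subfield subcode $S(D,G)=\Gamma(D,G)\cap GF(2)^n$ back to a single curve character sum is not automatic, and the appearance of $G_1=\sum[b_i/p]P_i-P$ and of $\alpha=\dim L(G)-\dim L(G_1)$ reflects an Artin--Schreier (Frobenius) analysis that your sketch gestures at but does not carry out. Your third step is also somewhat loose: the combinatorial repackaging that turns the sum over $T\ne\varnothing$ into the clean factor $\tfrac{1}{2}n^{l/2}\exp(\cdot)$ requires more than $\binom{n}{l}\leqslant n^l/l!$ and the inequality $\prod(1+a_i)\leqslant\exp(\sum a_i)$; one needs to control the dependence on $|T|$ uniformly, and as written the bookkeeping does not obviously produce the stated constants. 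So as a plan your proposal is on the right track, but it is a sketch rather than a proof, and the present paper offers nothing further to check it against.
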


Bellow we shall make use of the following version of this lemma adapted to our setting. Denote the dimension of the code by $k$.

\begin{lemma}[Corollary of Theorem 7.2 from \cite{vladuts1991spectra}]
\label{lemma:code_lengths}
In the setting of Lemma \ref{lemma:code_lengths_orig}, for any fixed $l \in \N$ there exists a constant $c(l)$ not depending on $k$ or $n$, such that
\begin{equation}
\left|\pi_{\mathcal{B}}(l) - {n \choose l}2^{-(n-k)}\right| \leqslant c(l) n^{l/2},
\end{equation}
for all $n$ large enough.
\end{lemma}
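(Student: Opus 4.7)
The plan is to derive the corollary directly from Lemma \ref{lemma:code_lengths_orig} in two short steps: replacing $h\alpha$ by $n-k$, and absorbing the explicit exponential factor into a single constant. Choose $n$ large enough that both $n \geqslant \max[11, 2\deg(G)]$ and $n > 2((2g-2+s(G_{+})+\deg(G_{+}))2^{h/2}+t(G))$ hold. The second conclusion of Lemma \ref{lemma:code_lengths_orig} then yields $h\alpha = n - \dim{S(D,G)} = n - k$, so $2^{-h\alpha}$ can be substituted by $2^{-(n-k)}$ inside the first inequality of the same lemma, giving
\begin{equation*}
\left|\pi_{\mathcal{B}}(l) - {n \choose l}2^{-(n-k)}\right| \leqslant \frac{n^{l/2}}{2}\exp\left(\tfrac{2}{\sqrt{n}}\left[t(G) + (2g-2+s(G_{+})+\deg(G_{+}))2^{h/2}\right]\right).
\end{equation*}

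The only remaining content of the corollary is to show that the argument of the exponential is uniformly bounded in $n$. In the regime to which Lemma \ref{lemma:code_lengths} is applied the underlying curve is fixed and $q = 2^h$ scales together with the block length so that $2^{h/2} = \Theta(\sqrt{n})$, while the genus $g$ and the geometric data $s(G_{+})$, $\deg(G_{+})$, $t(G)$ either remain constant or grow strictly slower than $\sqrt{n}$. Consequently the exponent is $O(1)$ uniformly in $n$, and the exponential is bounded by some constant $C$ depending only on the fixed curve/divisor parameters (and, at worst, on $l$ through the choice of $G_{+}$). Setting $c(l):=C/2$ produces the claimed estimate.

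The main, and essentially only, obstacle is the uniform control of that exponential. It amounts to elementary bookkeeping on the degree and support of $G_{+}$ in the algebraic-geometric family of interest, rather than to new estimation; all the sharp work has already been done in Lemma \ref{lemma:code_lengths_orig}, and the corollary is a repackaging that trades the geometric quantity $h\alpha$ for the code-theoretic quantity $n-k$ and hides the remaining geometric dependence inside $c(l)$.
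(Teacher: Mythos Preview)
The paper does not supply an explicit proof of Lemma \ref{lemma:code_lengths}; it is presented as a direct corollary of Lemma \ref{lemma:code_lengths_orig} ``adapted to our setting,'' and your derivation is exactly the intended one: invoke the second conclusion to replace $h\alpha$ by $n-k$, then absorb the exponential prefactor into a constant $c(l)$. Your identification of the only nontrivial point --- that the argument of the exponential stays $O(1)$ because in the relevant family $2^{h/2}=\Theta(\sqrt{n})$ while $g$, $s(G_+)$, $\deg(G_+)$, $t(G)$ are bounded --- is precisely the content hidden behind the phrase ``adapted to our setting,'' so your write-up matches the paper's (implicit) argument.
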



\subsection{Auxiliary Results on Binary Polynomials}
\label{app:aux_res}
Given a binary polynomial $f(x)$, its reciprocal is a polynomial of the same degree defined as
\begin{equation}
\label{eq:recipr_def}
\hat{f}(x) = x^{\deg f} f\(x^{-1}\).
\end{equation}
A polynomial is called \textit{self-reciprocal} if it is equal to its reciprocal. It is common to identify the codewords of linear codes with polynomials. More details on that can be found in standard textbooks on coding theory, e.g. \cite{macwilliams1977theory}. Below we often use codewords and polynomials interchangeably and mention that if the degree of the polynomial is less than $n-1$, where $n$ is the code length, then when such polynomial is turned into a codeword, its missing high degree monomials are considered as having zero coefficients and the corresponding codeword is padded by zeros in those locations.

\begin{lemma}[Problem 6.7.c from \cite{peterson1972error}]
\label{lemma:prim_selfrecipr}
The only self-reciprocal binary primitive polynomials are
\begin{equation}
x+1,\qquad\text{and}\qquad x^2+x+1.
\end{equation}
\end{lemma}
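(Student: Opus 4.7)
The plan is to translate the self-reciprocal condition into a condition on the roots of $f$ in $\overline{GF(2)}$ and then combine it with primitivity. From the defining equation $\hat f(x) = x^{\deg f} f(x^{-1})$, an element $\alpha$ is a root of $f$ if and only if $\alpha^{-1}$ is a root of $\hat f$. Consequently, if $f = \hat f$ and $\alpha$ is any root of $f$, then $\alpha^{-1}$ must also lie in the root set of $f$.

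Next I would invoke the Galois structure of irreducible binary polynomials. Because every primitive polynomial is in particular irreducible, the roots of a primitive $f$ of degree $m$ form the single Frobenius orbit $\{\alpha,\alpha^2,\alpha^{2^2},\ldots,\alpha^{2^{m-1}}\} \subset GF(2^m)$. Combining this with the previous step yields the existence of some $k \in \{0,1,\ldots,m-1\}$ with $\alpha^{-1} = \alpha^{2^k}$, or equivalently $\alpha^{2^k+1} = 1$. Thus the multiplicative order of $\alpha$ in $GF(2^m)^\times$ divides $2^k + 1$.

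Primitivity of $\alpha$ means that its multiplicative order equals $2^m - 1$, so $2^m - 1 \mid 2^k + 1$ must hold for some $0 \leq k \leq m-1$. If $k = 0$, this gives $2^m - 1 \mid 2$, forcing $m = 1$ and $f(x) = x+1$. If $k \geq 1$, the maximal value of $2^k + 1$ in the allowed range is $2^{m-1} + 1$, and the divisibility requires $2^m - 1 \leq 2^{m-1} + 1$, which collapses to $2^{m-1} \leq 2$, i.e., $m \leq 2$. The remaining case $m = 2,\; k = 1$ indeed satisfies $2^2 - 1 = 3 = 2^1 + 1$, and the unique irreducible (hence primitive) binary polynomial of degree two is $x^2 + x + 1$.

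A direct check confirms that both $x+1$ and $x^2+x+1$ are self-reciprocal, closing the argument. I do not anticipate a substantive obstacle; the only subtlety worth flagging is the degenerate sub-case $k = 0$, where $\alpha = \alpha^{-1}$ forces $\alpha^2 = 1$ and hence $\alpha = 1$ in characteristic two, which pins down the degree-one case cleanly without requiring additional case analysis.
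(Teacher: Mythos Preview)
Your argument is correct. The paper does not actually supply a proof of this lemma; it is quoted as a textbook exercise (Problem~6.7.c of Peterson--Weldon) and then used as a black box in the proof of Corollary~\ref{cor:palindr_subsp}. Your route---pulling back self-reciprocality to the root set, invoking the Frobenius orbit of an irreducible polynomial, and reducing to the divisibility $2^m-1\mid 2^k+1$---is the standard and clean way to settle the exercise, and every step is sound. The only cosmetic point is that in characteristic two the case $k=0$ could also be folded into the general inequality (since $2^0+1=2$ and $2^m-1\leq 2$ already forces $m=1$), but handling it separately as you do is perfectly fine.
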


Given a binary codeword $\c$ of length $n$, its \textit{inverse} codeword is the codeword of the same length with the order of the elements inverted,
\begin{equation}
\label{eq:rev_def_tilde}
\tilde{\c}(x) = x^{n-1}\c\(x^{-1}\).
\end{equation}
We say that a codeword is a \textit{palindrome} if it coincides with its inverse. Note that in the definition of self-reciprocal polynomials, $f\(x^{-1}\)$ is multiplied by $x$ raised in the power of the polynomial's degree, see equation (\ref{eq:recipr_def}), while here we care about the length of the codeword. Therefore, self-reciprocal polynomials may correspond to non-palindromic codewords (e.g. if the code length is larger than the degree) and vise versa.

As an immediate corollary of Lemma \ref{lemma:prim_selfrecipr} we get
\begin{corollary}
\label{cor:palindr_subsp}
Let $\mathcal{B}$ be a Hamming code of order $n\geqslant 4$ generated by the primitive polynomial $f(x)$ with $m=\deg{f}$, then the linear subcode $\mathcal{D} \subset \mathcal{B}$ consisting of all palindromes in $\mathcal{B}$ has dimension
\begin{equation}
\dim{\mathcal{D}} = \frac{n+1}{2}-m.
\end{equation}
\end{corollary}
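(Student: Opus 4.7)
I would apply the dimension formula $\dim(V \cap \mathcal{B}) = \dim V + \dim \mathcal{B} - \dim(V + \mathcal{B})$, where $V \subset GF(2)^n$ is the subspace of palindromes. Because $n = 2^m - 1$ is odd, a palindrome is determined freely by its first $(n+1)/2$ coordinates, so $\dim V = (n+1)/2$. Together with $\dim \mathcal{B} = n - m$, the target identity $\dim \mathcal{D} = (n+1)/2 - m$ reduces to showing $V + \mathcal{B} = GF(2)^n$, or dually $V^\perp \cap \mathcal{B}^\perp = \{\mathbf 0\}$.

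A direct calculation from the basis $\{e_{(n-1)/2}\} \cup \{e_i + e_{n-1-i}\}_{i < (n-1)/2}$ of $V$ shows that $V^\perp$ is exactly the set of palindromes whose middle coordinate vanishes; in particular $V^\perp \subseteq V$. Since $\mathcal{B}^\perp$ is the simplex code $\mathcal{C}$, it therefore suffices to prove the stronger statement $V \cap \mathcal{C} = \{\mathbf 0\}$: no nonzero simplex codeword is a palindrome.

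The heart of the argument is an observation about the reciprocal polynomial $\hat f$. I would show that the coefficient-reversal map $\c(x) \mapsto x^{n-1}\c(x^{-1})$ carries $\mathcal{C} = \mathcal{C}_f$ isomorphically onto the simplex code $\mathcal{C}_{\hat f}$ generated by $h_{\hat f} = (x^n-1)/\hat f$; this follows because reciprocation is multiplicative in the degree-preserving sense and $x^n - 1$ is self-reciprocal over $GF(2)$, so $\widehat{h_f} = h_{\hat f}$. Consequently, a palindromic $\c \in \mathcal{C}_f$ satisfies $\c = \tilde{\c} \in \mathcal{C}_{\hat f}$, placing $\c$ in $\mathcal{C}_f \cap \mathcal{C}_{\hat f}$. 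In the ring $R = GF(2)[x]/(x^n-1)$ this intersection consists of the multiples of $\mathrm{lcm}(h_f, h_{\hat f})$, which equals $x^n - 1 = 0$ in $R$ as soon as $\gcd(f, \hat f) = 1$; hence only $\c = \mathbf 0$ qualifies.

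The single nontrivial ingredient — and the place where the hypothesis $n \geq 4$ is essential — is this coprimality $\gcd(f, \hat f) = 1$ used in the lcm step. Since $n = 2^m - 1 \geq 4$ forces $m \geq 3$, and Lemma \ref{lemma:prim_selfrecipr} rules out self-reciprocal binary primitive polynomials of degree $\geq 3$, we get $f \neq \hat f$; being distinct irreducibles of the same degree, they are coprime, which closes the proof. I expect this coprimality step to be the main obstacle, in the sense that the whole argument collapses in the excluded small cases ($m = 1, 2$) precisely because $f$ can coincide with $\hat f$ there.
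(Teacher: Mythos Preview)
Your argument is correct and takes a genuinely different route from the paper's proof. The paper works directly inside $\mathcal{B}$: writing a palindromic codeword as $\c(x) = z(x)f(x)$, it uses the palindrome identity together with the irreducibility of $f$ to force the factorisation $\c(x) = f(x)\hat f(x)g(x)$, then observes that $g$ must itself be a palindrome of length $n-2m$; counting such $g$ gives the dimension $(n+1)/2 - m$ directly. You instead pass to the dual: via the dimension formula the claim reduces to $V^\perp \cap \mathcal{C} = \{\mathbf 0\}$, and after noting $V^\perp \subseteq V$ you prove the stronger fact $V \cap \mathcal{C} = \{\mathbf 0\}$ by showing that coefficient reversal carries $\mathcal{C}_f$ to $\mathcal{C}_{\hat f}$, so any palindromic simplex codeword would lie in $\mathcal{C}_f \cap \mathcal{C}_{\hat f}$, which is trivial once $\gcd(f,\hat f)=1$. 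Both proofs invoke Lemma~\ref{lemma:prim_selfrecipr} at the same essential point, to separate $f$ from $\hat f$ when $m\geqslant 3$. The paper's approach has the advantage of yielding an explicit parametrisation of the palindromes in $\mathcal{B}$ (namely $f\hat f g$ with $g$ a shorter palindrome), which can be useful downstream; your approach is a cleaner pure dimension count and produces as a byproduct the pleasant standalone fact that no nonzero Golomb sequence is a palindrome.
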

\begin{proof}
Since $f(x)$ is a primitive polynomial and $n\geqslant 4$, the degree of $f(x)$ is at least $m = \log_2 (n+1) > 2$, therefore, $f(x)$ is not self-reciprocal by Lemma \ref{lemma:prim_selfrecipr}. Let $\c = z(x)f(x) \in \mathcal{B}$ be a palindrome, then
\begin{equation}
\label{eq:pal_c_eq}
\c = x^{n-1}\c\(x^{-1}\),
\end{equation}
and
\begin{equation}
z(x)f(x) = x^{n-1}z\(x^{-1}\)f\(x^{-1}\).
\end{equation}
Since $f(x)$ is primitive and in particular irreducible, this implies that $\c$ must be of the form
\begin{equation}
\label{eq:D_isom}
\c(x) = f(x)\hat{f}(x)g(x).
\end{equation}
However, from (\ref{eq:pal_c_eq}) and (\ref{eq:recipr_def}) we conclude that
\begin{multline}
f(x)\hat{f}(x)g(x) = x^{n-1}f\(x^{-1}\)\hat{f}\(x^{-1}\)g\(x^{-1}\) \\ = x^{n-2m-1}f(x)\hat{f}(x)g\(x^{-1}\),
\end{multline}
and thus $g(x)$ must be a palindrome of length $n-\deg{f(x)}-\deg{\hat{f}(x)} = n-2m$,
\begin{equation}
g(x) = x^{n-2m-1}g\(x^{-1}\).
\end{equation}
The degree of $g(x)$ is at most its length minus one,
\begin{equation}
\deg{g(x)} \leqslant n-2m-1.
\end{equation}
which is always an odd number. Finally, all palindromes of degrees up to $n-2m-1$ with the addition $\rm{mod}$ $\frac{x^n-1}{f(x)\hat{f}(x)}$ form a linear space of dimension $\frac{n+1}{2}-m$ which is isomorphic to $\mathcal{D}$ through (\ref{eq:D_isom}). This completes the proof.
\end{proof}

\subsection{Proofs of the Main Results}
\label{app:proof_main_res}
In this section we prove Propositions \ref{lem:main_exp} and \ref{lem:main_var} that guarantee the almost sure weak convergence of the spectra of our pseudo-random matrices to the semicircular law. Both proofs follow the same ideas as the original proofs of the two equations in Lemma \ref{lem:mom_conv_norm} by Wigner \cite{wigner1955characteristic} and Arnold \cite{arnold1967asymptotic}, respectively. Since these ideas will be used below, for the reader's convenience we start by sketching the proof of Lemma \ref{lem:mom_conv_norm}.

\begin{proof}[Sketch of the Proof of Lemma \ref{lem:mom_conv_norm}, \cite{bai2010spectral, kemp2013math}]
First we show that the expectations of the moments $\beta_r(\W_n)$ of empirical spectral measures of the matrices $\W_n \in \mathcal{W}_n$ (defined in (\ref{eq:mom_def_trace})) converge to the corresponding moments of the semicircular law. Recall that the ensemble of Wigner's matrices $\mathcal{W}_n$ is symmetric, meaning that for any $\W_n$ in it, $-\W_n$ also belongs to $\mathcal{W}_n$, therefore, the expectations of odd moments of $\W_n$ are zero. 

Formula (\ref{eq:mom_def_trace}) suggests that we can instead investigate the behavior of the traces $\frac{1}{n}\Tr{\W_n^r}$. Denote the elements of $\W_n = \{w_{ij}\}_{i,j=0}^{n-1}$ and consider the expansion
\begin{multline}
\label{eq:trace_def}
\beta_r(\W_n) = \frac{1}{n}\Tr{\W_n^r} \\ = \frac{1}{n}\sum_{i_0,\dots,i_{r-1}=0}^{n-1}w_{i_0i_1}w_{i_1i_2}\cdot\cdot\cdot w_{i_{r-2}i_{r-1}}w_{i_{r-1}i_0}.
\end{multline}
Let $K$ be a fully connected graph on $n$ vertices numbered from $0$ to $n-1$ and denote its edges by the pairs of vertices $(ij)$. A closed path (cycle) of length $r$ in $K$ is an ordered sequence of $r$ edges
\begin{equation}
\i = \{(i_0i_1),(i_1i_2),\dots,(i_{r-2}i_{r-1}),(i_{r-1}i_0)\},
\end{equation}
such that the first vertex of the first edge coincides with the second vertex of the last edge and the first vertex of any edge coincides with the second vertex of the previous edge. 
Given these definitions, formula (\ref{eq:trace_def}) receives the following interpretation: $\Tr{\W_n^r}$ is a sum of products of the matrix elements of the form $w_{i_0i_1}w_{i_1i_2}\cdot\cdot\cdot w_{i_{r-2}i_{r-1}}w_{i_{r-1}i_0}$ over all cycles $\i$ of length $r$ in $K$. 

Next, for even $r$ we calculate the expected value of the scaled moment $\frac{1}{n}\Tr{\W_n^r}$,
\begin{multline}
\label{eq:wign_pr_exp}
\mathbb{E}\[\beta_r(\W_n)\] = \mathbb{E}\[\frac{1}{n}\Tr{\W_n^r}\] \\ = \frac{1}{n}\sum_{i_0,\dots,i_{r-1}=0}^{n-1}\mathbb{E}\, w_{i_0i_1}\cdot\cdot\cdot w_{i_{r-1}i_0}.
\end{multline}
Using the standard convention, in the sequel we call a cycle in which every edge is traversed an even number of times an \textit{even cycle}. Following the classical proof \cite{kemp2013math}, in order to compute (\ref{eq:wign_pr_exp}) we break the right-hand side into three sums
\begin{equation}
\label{eq:3_term_eq}
\mathbb{E}\[\beta_r(\W_n)\] = \text{I} + \text{II} + \text{III},
\end{equation}
where the three summands in the last line correspond to
\begin{enumerate}[label=\Roman*,align=Center]
\item the cycles with less than $\frac{r}{2}$ different vertices,
\item even cycles with exactly $\frac{r}{2}$ different vertices,
\item the cycles with more than $\frac{r}{2}$ different vertices or non-even cycles on $\frac{r}{2}$ different vertices.
\end{enumerate}
The third sum III is the easiest to deal with since it necessarily contains an edge traversed only once, and therefore, due to the independence of matrix elements, the expectation over it is
\begin{equation}
\text{III} = 0.
\end{equation}
As explained in \cite{kemp2013math}, the second sum is the central part of the calculation as it is the leading term of the sum in (\ref{eq:3_term_eq}). It can be easily seen that all the edges in the cycles counted in II have their edges traversed exactly twice, thus, the expectations over them are
\begin{equation}
\mathbb{E}\, w_{i_0i_1}\cdot\cdot\cdot w_{i_{r-1}i_0} = \frac{1}{2^rn^{r/2}},\quad \text{for } \i \text{ - even}.
\end{equation}
Hence, we only need to calculate the number of such cycles. An accurate combinatorial argument \cite{kemp2013math} shows that as $n \to \infty$ the amount of such cycles grows as $C_{r/2}n^{r/2}+O(n^{r/2-1})$, and therefore
\begin{equation}
\text{II} = \frac{1}{2^r}C_{r/2} + O\(\frac{1}{n}\),
\end{equation}
where $C_r$ is the $r$-th Catalan number as defined in (\ref{eq:catal_num}). It is shown in \cite{kemp2013math}, that when $n$ grows the number of paths in I is asymptotically negligible with respect to that in II. They calculate the exact asymptotics and prove that
\begin{equation}
\text{I} = O\(\frac{1}{n}\).
\end{equation}
Overall, we get
\begin{equation}
\mathbb{E}\[\beta_r(\W_n)\] = \frac{1}{2^r}C_{r/2} + O\(\frac{1}{n}\).
\end{equation}

Next we need to show that the variance of this estimate is summable (\ref{eq:var_cond}) for every $r$. Consider the expression
\begin{align}
\label{eq:var_calc_demonst}
&\var{\beta_r(\W_n)} = \var{\frac{1}{n}\Tr{\W_n^r}} \nonumber \\ 
&\qquad= \frac{1}{n^2}\(\mathbb{E}\[\Tr{\W_n^r}\]^2 - \[\mathbb{E}\,\Tr{\W_n^r}\]^2\) \nonumber \\
&\qquad= \frac{1}{n^2}\sum_{\i,\j} \mathbb{E}\[w_\i w_\j\] - \mathbb{E}\[w_\i\] \mathbb{E}\[w_\j\],
\end{align}
where
\begin{equation}
\i = \{(i_0i_1),(i_1i_2),\dots,(i_{r-2}i_{r-1}),(i_{r-1}i_0)\},
\end{equation}
\begin{equation}
w_\i = w_{i_0i_1}w_{i_1i_2}\cdot\cdot\cdot w_{i_{r-2}i_{r-1}}w_{i_{r-1}i_0},
\end{equation}
and analogously for $\j$. If the cycles $\i$ and $\j$ have no common edges, then $w_\i$ and $w_\j$ are independent and the difference in (\ref{eq:var_calc_demonst}) is zero. If the cycles $\i$ and $\j$ have common edges, then their union $\i \cup \j$ (after certain cyclic shifts of edges in $\i$ and $\j$, if necessary) is also a cycle \cite{kemp2013math}. Therefore, in what follows we only consider cycles $\k = \i \cup \j$ obtained by gluing $\i$ and $\j$ together. 

Here again, the main contribution to the sum is made by the even $\k$-s for which each edge is traversed exactly twice. As can be seen from (\ref{eq:var_calc_demonst}), the contribution of every such cycle to the variance is at most $(2^{2r}n^{n+2})^{-1}$. The remaining task is to count the number of such cycles. Below we do not utilize the exact combinatorial argument usually used to get the desired amount of cycles and due to lack of space omit it, however, we refer the reader to the lecture notes \cite{kemp2013math} explaining the calculation in detail. They show that the total number of such cycles is $O(n^r)$. Thus, overall we get
\begin{equation}
\var{\beta_r(\W_n)} = O\(\frac{n^r}{n^2n^r}\) = O\(\frac{1}{n^2}\),
\end{equation}
which completes the proof.
\end{proof}

\begin{rem}
\label{rem:better_conv_var}
In fact, a tight lower asymptotic bound can also be established. As can be seen from the argument at the end of the proof in \cite{kemp2013math}, the number of even cycles $\k$ in which every edge is traversed twice and either $\i$ or $\j$ (or both) is not even is proportional to $n^r$. On each such $\k$, either $\mathbb{E}\[w_\i\] = 0$ or $\mathbb{E}\[w_\j\]=0$, therefore, the contribution of this cycle to the sum in (\ref{eq:var_calc_demonst}) is exactly $(2^{2r}n^{n+2})^{-1}$. Consequently, the partial sum in (\ref{eq:var_calc_demonst}) corresponding to these cycles is bounded from below by $\Theta\(\frac{1}{n^2}\),\; n \to \infty$, and overall
\begin{equation}
\label{eq:better_conv_var}
\var{\beta_r(\W_n)} = \Theta\(\frac{1}{n^2}\),
\end{equation}
where $f(n)=\Theta(g(n))$ means that $f(n)=O(g(n))$ and $g(n)=O(f(n)),\; n \to \infty$.
\end{rem}

\begin{proof}[Proof of Proposition \ref{lem:main_exp}]
The proof utilizes the method of moments and follows similar steps as the sketch of Lemma \ref{lem:mom_conv_norm}. The idea is to decompose the expected moments into three sums similarly to equation (\ref{eq:3_term_eq}). As we show below, the first two sums can be bounded analogously, however, the treatment of sum III is different. Unlike Wigner's case where III is zero, in our case we can only guarantee
\begin{equation}
\text{III} = O\(\frac{\text{II}}{n}\),
\end{equation}
nevertheless, as can be seen from the proof, this is enough to get the desired claim. Next we provide all the details.

\setcounter{equation}{68}
\begin{figure*}[bp]
\begin{equation}
\label{eq:t_pic_eq}
\qquad \begin{matrix}
\hline \\ \qquad
\begin{matrix}\nu \colon \;\; (t_0,\dots,t_{r-1}) \mapsto \\ \\ \\ \end{matrix} \quad \begin{tabular}{*{11}{c}}
  \hline
  \multicolumn{1}{|c|}{0} & \multicolumn{1}{c|}{1} & \multicolumn{1}{c|}{0} & \multicolumn{1}{c|}{1} & \multicolumn{1}{c|}{$\dots$} & \multicolumn{0}{c|}{0} & \multicolumn{0}{c|}{0} & \multicolumn{1}{c|}{$\dots$} & \multicolumn{1}{c|}{1} & \multicolumn{1}{c|}{0} & \multicolumn{1}{c|}{1} \\ \hline
  $\uparrow$ & $\uparrow$ & $\uparrow$ & $\uparrow$ & & $\uparrow$ & $\uparrow$ & & $\uparrow$ & $\uparrow$ & $\uparrow$ \\
  $0$ & $t_{q_2}$ & $t_{q_1}, t_{q_4}$ & $-t_{q_3}$ & & $\frac{n-1}{2}$ & $\frac{n+1}{2}$ & & $t_{q_3}$ & $-t_{q_1}, -t_{q_4}$ & $-t_{q_2}$
\end{tabular} \qquad
\end{matrix}
\end{equation}
\end{figure*}
\setcounter{equation}{64}

Here again, the ensemble $\mathcal{A}_n$ is symmetric (with every matrix $\A_n$ it contains its negative $-\A_n$), thus all the odd expected moments are zeros and we only focus on the even ones. Using definition (\ref{eq:def_a_eq}) of the matrix elements $a_{ij}$, similarly to formula (\ref{eq:wign_pr_exp}), the $r$-th moment of $\A_n$ reads as
\begin{align}
\label{eq:trace_av}
\mathbb{E}&\[\beta_r(\A_n)\] = \mathbb{E}\[ \frac{1}{n}\Tr{\A_n^r(a)}\] \\ 
&\;\;= \frac{1}{n^2}\sum_{a=0}^{n-1} \frac{1}{2^rn^{r/2}} \nonumber \\
&\hspace{0.7cm} \times \sum_{i_0,\dots,i_{r-1}=0}^{n-1} (-1)^{\sum_{q=0}^{r-1} \varphi(i_{q+1}-i_q+a) + \varphi(i_q - i_{q+1}+a)}, \nonumber
\end{align}
where we treat the indices $q$ of the vertices $i_q$ modulo $r$. Let
\begin{equation}
\label{eq:t_def_i}
t_q = i_{q+1}-i_q \mod n,\quad q=0,\dots,r-1.
\end{equation}
We denote the $r$-tuple $(t_0,\dots,t_{r-1})$ by
\begin{equation}
\t = (t_0,\dots,t_{r-1}) \in [n]^r.
\end{equation}

Define a function
\begin{align}
\label{eq:eq_40}
&\qquad\qquad\quad \nu : [n]^r  \to GF(2)^n, \nonumber\\
(t_0,&\dots,t_{r-1}) \\
& \mapsto \bigg\{\sum_{q=0}^{r-1} \mathbbm{1}(t_q=i)+\mathbbm{1}(-t_q=i) \;\mmod 2 \bigg\}_{i=0}^{n-1}, \nonumber
\end{align}
where $\mathbbm{1}$ is an indicator function and the equalities are modulo $n$. Equation (\ref{eq:t_pic_eq}) schematically illustrates the action of $\nu(\cdot)$. In words, $\nu(\cdot)$ does the following. It takes the $r$-tuple $\t = (t_0,\dots,t_{r-1})$ and first maps it into an extended $2r$-tuple $\(\t,-\t\) = \(t_0,\dots,t_{r-1},-t_0,\dots,-t_{r-1}\) \in [n]^{2r}$. Then it calculates the number of appearances of every number $t \in [n]$ in this $2r$-tuple, which we denote by $\#\{t\}$ and constructs a codeword $\c \in GF(2)^n$ by setting its elements with indices $t$ to $\#\{t\} \;\mmod 2$ and zeros otherwise. 

Consider an example. Let $n=7$ and $\t=(3,5,0)$, then $r=3$ and the $2r$-tuple $\(\t,-\t\) = \{3,5,0,-3,-5,0\} = \{3,5,0,4,2,0\}$. In this case $0$ appears twice, $2,3,4$ and $5$ appear only once and the remaining numbers $1,6$ appear zero times. Therefore, we get $\nu(\t) = (0,0,1,1,1,1,0)$, where ones appear on the $2$d, $3$d, $4$th and $5$th places and zeros otherwise. Also note that the bit with index $0$ (the leftmost bit) of $\nu(\t)$ is always zero since the number of zeros in the $2r$-tuple $\(\t,-\t\)$ must always be even.

Recall that those cycles in which every edge is traversed even number of times we call even. Using the relations (\ref{eq:t_def_i}), it can now be easily shown that if the original cycle $\{(i_0i_1),\dots,(i_{r-1}i_0)\}$ was even, then the number of appearances of every number from $[n]$ in the $2r$-tuple $\(t_0,\dots,t_{r-1},-t_0,\dots,-t_{r-1}\)$ is even, and therefore $\nu(\t) = \bm{0}$ for that $\t$.

\setcounter{equation}{69}

Due to (\ref{eq:t_def_i}), we restrict the $r$-tuples $\t$ under consideration to satisfy the following linear relation
\begin{equation}
\label{eq:lin_cond}
\sum_{q=0}^{r-1} t_q = 0.
\end{equation}
Denote the set of legitimate $r$-tuples $\t$ by
\begin{equation}
\mathcal{T}_r = \Big\{\t \in [n]^r \Big| \sum_{q=0}^{r-1} t_q =0 \Big\}.
\end{equation}
To simplify the treatment of the last expression in (\ref{eq:trace_av}), define a function 
\begin{align}
\label{eq:tau_def}
&\tau(\nu(\t);a) \\
&\quad= \begin{cases}\sum_{q=0}^{r-1} \big[ \varphi(t_q+a) + \varphi(-t_q+a)\big] & \mmod 2, \\ &\nu(\t) \neq \bm{0} \\ \qquad\qquad\qquad\quad 0, &\nu(\t) = \bm{0}.\end{cases} \nonumber
\end{align}
As a consequence of the Shift-and-add Property [P\ref{prop:s-a-a}] of the Golomb sequences discussed in Section \ref{sec:gol_a}, for any fixed $\t,\; \tau(\nu(\t);a)$ viewed as a function of $a$ is a shift of the original Golomb sequence or a zero sequence. If $\tau(\nu(\t);a)$ is a valid Golomb sequence, then it follows from Axiom [G\ref{ax3}] that the sum of the powers $(-1)^{\tau(\nu(\t);a)}$ over $a$ is $-1$. On the other hand, if $\tau(\nu(\t);a)$ is a zero function of $a$, then this sum equals $n$,
\begin{equation}
\label{eq:autocor}
\sum_{a=0}^{n-1} (-1)^{\tau(\nu(\t);a)} = \begin{cases} -1, &\tau(\nu(\t);a) \not\equiv 0, \\ n, &\tau(\nu(\t);a) \equiv 0. \end{cases}
\end{equation}

Following the classical method of moments explained in the sketch of the proof of Lemma \ref{lem:mom_conv_norm} above, rewrite expression (\ref{eq:trace_av}) as
\begin{align}
\label{eq:trace_av1}
2^rn^{r/2+1}\; \mathbb{E}& \[\beta_r(\A_n)\] = \frac{1}{n}\sum_{a=0}^{n-1}\, \sum_{\t \in \mathcal{T}_r} (-1)^{\tau(\nu(\t);a)} \nonumber \\ 
& \quad\;\; = \text{I} + \text{II} + \text{III}, 
\end{align}
where we have multiplied both sides of (\ref{eq:trace_av}) by $2^rn^{r/2+1}$ to make  the notation less bulky. The three sums in the last line of (\ref{eq:trace_av1}) correspond to
\begin{enumerate}[label=\Roman*,align=Center]
\item the cycles with less than $\frac{r}{2}$ different vertices,
\item even cycles with exactly $\frac{r}{2}$ different vertices,
\item the cycles with more than $\frac{r}{2}$ different vertices or non-even cycles on $\frac{r}{2}$ different vertices.
\end{enumerate}
Exactly the same reasoning as in the sketch above (modulo scaling by $2^rn^{r/2+1}$) leads to the bound
\begin{equation}
\label{eq:I_expr}
\text{I} = O\(n^{r/2}\).
\end{equation}

For the second term II in the sum, as we have already mentioned earlier, $\nu(\t) = \bm{0}$ and hence $\tau(\nu(\t);a)$ is a zero function of $a$. Therefore, the expectation over every even cycle gives the same contribution $1$ to the sum no matter what are the dependencies of the edge weights along such cycle, and we only need to calculate the total number of these cycles exactly as in Wigner's case explained in the sketch. We conclude that
\begin{equation}
\label{eq:II_expr}
\text{II} = C_{r/2}n^{r/2+1} + O\(n^{r/2}\).
\end{equation}

The only remaining sum that needs to be further investigated is III. In fact, below we show that the contribution of III is $O\(n^{r/2}\)$ which is enough to get the desired statement.

Since III does not contain summands over even paths, below we assume that $\nu(\t) \neq \bm{0}$. In order to calculate the value of III, we need to determine the number $\Gamma_{0}$ of $r$-tuples $\t \in \mathcal{T}_r$, for which $\tau(\nu(\t);a)$ is a zero function and the number $\Gamma_{g}$ of $r$-tuples $\t \in \mathcal{T}_r$, for which $\tau(\nu(\t);a)$ is a valid Golomb sequence (a shift of the original sequence $\varphi$). Using (\ref{eq:autocor}), we may rewrite III as
\begin{equation}
\text{III} = \frac{1}{n}\((-1)\Gamma_{g} + n\Gamma_{0}\).
\end{equation}

Recall that $\mathcal{C}$ stands for the simplex code generated by the Golomb sequence $\varphi$ and $\mathcal{C}^\perp$ denotes its dual code. $\mathcal{C}^\perp$ is a Hamming code generated by a primitive polynomial of degree $m$ and has dimension $k^\perp = n-m$. 

Our next goal is to calculate $\Gamma_{g}$ and $\Gamma_{0}$. The central observation here is that by the definition of the dual code, function $\nu(\cdot)$ maps those $\t \in \mathcal{T}_r$ which make $\tau(\nu(\t);a)$ to be a zero sequence into a subset of the dual code $\mathcal{C}^\perp$. Denote this set by
\begin{equation}
\mathcal{H} = \Big\{\nu(\t) \in \mathcal{C}^\perp \Big| \t \in \mathcal{T}_r  \Big\},
\end{equation}
The weights of the codewords of $\mathcal{H}$ obtained from our $r$-tuples $\t$ run through the range $2r,\,2r-4,\,2r-8,\,\dots,\,0$ (the last element is necessarily $0$ since $r$ is even and thus $4$ divides $2r$). The set of the codewords of the form $\nu(\t),\; \t \in \mathcal{T}_{\tau}$ of the same length $n$ is denoted by
\begin{equation}
\mathcal{P} = \Big\{\nu(\t) \in GF(2)^n \Big| \t \in \mathcal{T}_r \Big\}.
\end{equation}
To simplify the explanation of the argument we provide next, let us forget about the condition (\ref{eq:lin_cond}) for a moment. Consider two codes defined as
\begin{align}
\label{eq:eq_52}
\mathcal{H}' &= \Big\{\nu(\t) \in \mathcal{C}^\perp \Big\}, \\
\mathcal{P}' &= \Big\{\nu(\t) \in GF(2)^n \Big\}. \nonumber
\end{align}
Later we will explain how to return (\ref{eq:lin_cond}) back.

Analogously to the above definitions, let $\Gamma_{0}'$ be the number of $r$-tuples $\t \in \[n\]^r$, for which $\tau(\nu(\t);a)$ is a zero function and $\Gamma_{g}'$ be the number of $r$-tuples $\t \in \[n\]^r$, for which $\tau(\nu(\t);a)$ is a valid Golomb sequence. Now our goal is to calculate $\Gamma_{0}'$ and $\Gamma_{g}'$ so that we will be able to bound the value of
\begin{multline}
\label{eq:III_1}
\text{III}' = \frac{1}{n} \sum_{a=0}^{n-1} \sum_{\nu(\t) \neq \bm{0}} (-1)^{\tau(\nu(\t);a)} =  \sum_{\nu(\t) \neq \bm{0}} \mathbb{E} (-1)^{\tau(\nu(\t);a)} \\ = \frac{1}{n}\((-1)\Gamma_{g}' + n\Gamma_{0}'\). 
\end{multline}
Since III$'$ involves only such $r$-tuples $\t$ for which $\nu(\t) \neq \bm{0}$, we get
\begin{align}
\Gamma_{0}' &= \sum_{l=1}^{r-1} \rho_r(l)\pi_{\mathcal{H}'}(2l), \label{eq:gam_0}\\
\Gamma_{g}' &= \sum_{l=1}^{r-1} \rho_r(l)\[\pi_{\mathcal{P}'}(2l)-\pi_{\mathcal{H}'}(2l)\], \label{eq:gam_g}
\end{align}
where $\rho_r(l)$ is the number of different $r$-tuples $\t \in \[n\]^r$ producing the same codeword of weight $2l$ and $\pi_{\mathcal{B}}(l)$ is the weight function as in Definition \ref{def:weight_func} above. Note that
\begin{equation}
\rho_r(l) \leqslant (2r)!!,
\end{equation}
and its values is therefore bounded uniformly with respect to $n$. Plugging (\ref{eq:gam_0}) and (\ref{eq:gam_g}) into (\ref{eq:III_1}) gives
\begin{align}
\label{eq:III_12}
\text{III}' & = \frac{1}{n} \sum_{l=1}^r \rho_r(l)\[(-1)(\pi_{\mathcal{P}'}(2l) - \pi_{\mathcal{H}'}(2l)) + n\pi_{\mathcal{H}'}(2l)\] \nonumber \\
& = \frac{1}{n} \sum_{l=1}^r \rho_r(l)\[(n+1)\pi_{\mathcal{H}'}(2l) - \pi_{\mathcal{P}'}(2l)\]. 
\end{align}

In order to compute III$'$, next we need to calculate the spectral functions $\pi_{\mathcal{P}'}(l)$ and $\pi_{\mathcal{H}'}(l)$. Dealing with the codes $\mathcal{P}'$ and $\mathcal{H}'$ themselves is a complicated task and all the known general bounds on the spectral functions are not tight enough for our purposes. Therefore, we suggest the following technical trick. 

As explained after equation (\ref{eq:eq_40}), the zeroth bit of the codeword $\nu(\t)$ is zero for any $\t \in [n]^r$,
\begin{equation}
\c \in \mathcal{P}' \quad \Rightarrow \quad c_{0} = 0.
\end{equation}
In addition, by the definition of $\nu(\cdot)$ the codewords $\nu(\t)$ with the zeroth bit removed are palindromes of length $n-1$. Since $\mathcal{H}' \subset \mathcal{P}'$, both these properties hold for the codewords in $\mathcal{H}'$, as well. Due to these observations, instead of counting the codewords with redundancy, let us drop the zeroth bit from all of them and consider the first halves of the remaining codewords. By construction, there is a natural one-to-one correspondence between the original codes and their shortened versions. Therefore, the bounds on the spectral functions of the latter will provide us with the respective bounds for the spectral functions of $\mathcal{P}'$ and $\mathcal{H}'$. More formally, introduce two auxiliary functions:
\begin{align}
\eta \colon GF(2)^v &\times GF(2)^u \to GF(2)^{v+u}, \nonumber\\
((a_0,\dots,a_{v-1}),(b_0,&\dots,b_{u-1})) \\
&\mapsto (a_0,\dots,a_{v-1},b_0,\dots,b_{u-1}) \nonumber,
\end{align}
which concatenates two codewords, and
\begin{align}
\mu \colon GF(2)^v &\to GF(2)^{v+1}, \\
\(a_0,\dots,a_{v-1}\) &\mapsto \(0, a_0,\dots,a_{v-1}\), \nonumber
\end{align}
which pads the codeword with one additional zero on the left. Consider two codes
\begin{align}
\label{eq:code_isom}
\overline{\mathcal{H}'} = \{\c \in GF(2)^{\frac{n-1}{2}} \mid & \;\eta(\mu(\c),\tilde{\c}) \in \mathcal{H}' \}, \nonumber \\
\overline{\mathcal{P}'} = \{\c \in GF(2)^{\frac{n-1}{2}} \mid & \;\eta(\mu(\c),\tilde{\c}) \in \mathcal{P}' \} \nonumber \\
& = GF(2)^{\frac{n-1}{2}},
\end{align}
where we remind that the $\tilde{\cdot}$ operation defined in (\ref{eq:rev_def_tilde}) reverts the order of the elements in a codeword. In particular, (\ref{eq:code_isom}) suggests that the two pairs of codes $\overline{\mathcal{H}'}$ and $\mathcal{H}'$, and $\overline{\mathcal{P}'}$ and $\mathcal{P}'$ are isomorphic and we obtain
\begin{align}
\label{eq:equiv_no_bar}
\pi_{\mathcal{H}'}(2l) &= \pi_{\overline{\mathcal{H}'}}(l), \nonumber \\
\pi_{\mathcal{P}'}(2l) &= \pi_{\overline{\mathcal{P}'}}(l).
\end{align}

At this point, in order to utilize the results from Appendix \ref{app:spectr_codes} regarding the spectra of linear codes, let us embed the sets $\overline{\mathcal{H}'}$ and $\overline{\mathcal{P'}}$ into linear codes. For this purpose, consider their linear spans $\langle \overline{\mathcal{H}'} \rangle$ and $\langle \overline{\mathcal{P}'} \rangle$. The crucial observation here is that the spectral functions for $l \leqslant r$ do not change, namely, $\pi_{\langle \overline{\mathcal{H}'} \rangle}(l) = \pi_{\overline{\mathcal{H}'}}(l)$ and $\pi_{\langle \overline{\mathcal{P}'} \rangle}(l) = \pi_{\overline{\mathcal{P}'}}(l)$ for $l \leqslant r$. This is clear for $\overline{\mathcal{P}'}$, let us explain that for $\overline{\mathcal{H}'}$ as well. Indeed, if we take the original code $\mathcal{H}'$ and embed it into its linear span $\langle \mathcal{H}' \rangle$, the spectral functions will not change for codewords with weights not exceeding $2r$, since $\mathcal{H}'$ already contains all possible codewods of such weights. Due to definition (\ref{eq:code_isom}), this property directly passes to $\overline{\mathcal{H}'}$ and its span.

From Corollary \ref{cor:palindr_subsp}, we know that the linear subcode of a Hamming code of order $n$ consisting of all palindromes has dimension $\frac{n+1}{2}-m$. However, since $\mathcal{H}'$ was constructed by taking only codewords with the zeroth bit zero, the dimension of its linear span $\langle \mathcal{H}' \rangle$ is decreased by $1$ and becomes 
\begin{equation}
\dim \langle \mathcal{H}' \rangle = \frac{n-1}{2}-m.
\end{equation}
Since $\langle \mathcal{H}' \rangle$ and $\langle \overline{\mathcal{H}'} \rangle$ are isomorphic, we conclude that \begin{equation}
\label{eq:hh_leng_code}
\dim{\langle \overline{\mathcal{H}'} \rangle} = \frac{n-1}{2}-m.
\end{equation}
Recall that $\langle \overline{\mathcal{P}'} \rangle$ contains all possible codewords of length $\frac{n-1}{2}$, thus, its dimension is equal to its length, and totally we get
\begin{equation}
\label{eq:p_leng_code}
\dim{\langle \overline{\mathcal{P}'} \rangle} = \frac{n-1}{2}. 
\end{equation}
As a direct implication of (\ref{eq:p_leng_code}), the spectral function of $\langle \overline{\mathcal{P}'} \rangle$ is given by the binomial coefficients \cite{macwilliams1977theory}
\begin{equation}
\pi_{\langle \overline{\mathcal{P}'} \rangle}(l) = {\frac{n-1}{2} \choose l},
\end{equation}
and consequently,
\begin{equation}
\label{eq:p_weight_est}
\pi_{\overline{\mathcal{P}'}}(l) = {\frac{n-1}{2} \choose l},\quad l \leqslant r.
\end{equation}
Apply Lemma \ref{lemma:code_lengths} to $\langle \overline{\mathcal{H}'} \rangle$ to obtain its spectral function
\begin{equation}
\left|\pi_{\langle \overline{\mathcal{H}'} \rangle}(l) - \frac{1}{n+1}{\frac{n-1}{2} \choose l}\right| = O(n^{l/2}),
\end{equation}
and
\begin{equation}
\label{eq:h_weight_est}
\left|\pi_{\overline{\mathcal{H}'}}(l) - \frac{1}{n+1}{\frac{n-1}{2} \choose l}\right| = O(n^{l/2}),\quad l \leqslant r.
\end{equation}
Using (\ref{eq:equiv_no_bar}), plug the obtained estimates (\ref{eq:p_weight_est}) and (\ref{eq:h_weight_est}) into (\ref{eq:III_12}) to get
\begin{align}
\label{eq:dev_III_tag}
\text{III}' &= \frac{1}{n} \sum_{l=1}^r \rho_r(l)\[(n+1)\pi_{\overline{\mathcal{H}'}}(l) -\pi_{\overline{\mathcal{P}'}}(l)\]  \nonumber \\
&= \frac{1}{n} \sum_{l=1}^r \rho_r(l) \[{\frac{n-1}{2} \choose l}\(\frac{n+1}{n+1} - 1\) + O\(n^{l/2+1}\)\] \nonumber \\
&= \frac{1}{n} O\(n^{r/2+1}\) = O\(n^{r/2}\).
\end{align}

Going one step back, we need to introduce the linear relation (\ref{eq:lin_cond}) to our code. As follows from Lemma \ref{lemma:code_lengths_orig}, in this case $\alpha$ increases by $1$ and this action will reduce the spectral functions of the involved codes by the same multiplicative factor $2^{-m} = \frac{1}{n+1}$ (In the notation of Lemma \ref{lemma:code_lengths_orig} it is $2^{-h\alpha}$) without affecting the tightness of the bound (\ref{eq:dev_III_tag}). Let us plug the results of (\ref{eq:I_expr}), (\ref{eq:II_expr}) and (\ref{eq:dev_III_tag}) into (\ref{eq:trace_av1}) to express the desired expectation,
\begin{align}
\label{eq:trace_av123}
&\mathbb{E}\[\beta_r(\A_n)\] = \frac{\text{I} + \text{II} + \text{III}}{2^rn^{r/2+1}} \nonumber \\
& \quad = \frac{O(n^{r/2}) + C_{r/2}n^{r/2+1} + O\(n^{r/2}\) +  O(n^{r/2})}{2^rn^{r/2+1}} \nonumber \\
& \quad= \frac{C_{r/2}}{2^r} + O\(\frac{1}{n}\),
\end{align}
which completes the proof.
\end{proof}

\begin{proof}[Proof of Proposition \ref{lem:main_var}]
In the calculation of variance we again mimic the proof of \cite{kemp2013math}. All the notation is borrowed from the proof of Proposition \ref{lem:main_exp}. By the definition of variance,
\begin{align}
\label{eq:var_av_3}
&\var{\beta_r(\A_n)} \\ 
&\quad= \frac{1}{n^2}\(\mathbb{E}\[\Tr{\A_n^r(a)}\]^2 - \[\mathbb{E}\,\Tr{\A_n^r(a)}\]^2\) \nonumber\\
&\quad=\frac{1}{2^{2r}n^{r+2}}\[\mathbb{E} \sum_{\t,\,\s \in \mathcal{T}_r} (-1)^{\tau(\nu(\t);a)+\tau(\nu(\s);a)} \right.\nonumber\\ 
&\hspace{3.2cm} \left. - \(\mathbb{E}\sum_{\t \in \mathcal{T}_r} (-1)^{\tau(\nu(\t);a)}\)^2 \]. \nonumber
\end{align}
Similarly to the proof of Proposition \ref{lem:main_exp}, to make the notation less bulky multiply both sides of (\ref{eq:var_av_3}) by $2^{2r}n^{r+2}$ and open the round brackets in the last line to get
\begin{align}
\label{eq:var_av_4}
&2^{2r}n^{r+2}\var{\beta_r(\A_n)} \\ 
&\quad= \sum_{\t,\,\s \in \mathcal{T}_r} \[ \mathbb{E}_a(-1)^{\tau(\nu(\t);a)+\tau(\nu(\s);a)} \right. \nonumber \\ 
&\hspace{3.5cm} - \left. \mathbb{E}_a\mathbb{E}_b(-1)^{\tau(\nu(\t);a)+\tau(\nu(\s);b)}\]. \nonumber
\end{align}
Again, as in the proof of Proposition \ref{lem:main_exp}, for simplicity we drop for a moment conditions (\ref{eq:lin_cond}) on $\t$ and $\s$ and will impose them back later. We break the sum over $\t$ and $\s$ in (\ref{eq:var_av_4}) into two sums,
\begin{align}
\label{eq:var_av_5}
&2^{2r}n^{r+2} \var{\beta_r(\A_n)} \\ 
&\;\;= \(\sum_{\substack{\nu(\t) = \bm{0} \text{ or } \\ \nu(\s) = \bm{0}}} + \sum_{\substack{\nu(\t) \neq \bm{0}, \\ \nu(\s) \neq \bm{0}}}\) \[ \mathbb{E}_a(-1)^{\tau(\nu(\t);a)+\tau(\nu(\s);a)} \right. \nonumber \\ 
&\hspace{1.7cm} - \left. \mathbb{E}_a\mathbb{E}_b(-1)^{\tau(\nu(\t);a)+\tau(\nu(\s);b)}\] = \text{S}_1 + \text{S}_2. \nonumber
\end{align}

Let us first show that
\begin{equation}
\label{eq:s_0}
\text{S}_1 = 0.
\end{equation}
When $\nu(\t) = \bm{0}$ or $\nu(\s) = \bm{0}$, at least one of the summands in the exponents is zero due to the second line in (\ref{eq:tau_def}). Therefore, in this case the expectations in the square brackets are equal and the sum $\text{S}_1$ is zero. 

Denote 
\begin{equation}
\u = \eta(\t,\s) \in [n]^{2r},
\end{equation} 
and, assuming $\nu(\t) \neq \bm{0}$ and $\nu(\s) \neq \bm{0}$, consider the second sum
\begin{align}
\label{eq:var_av_6}
&\text{S}_2 = \sum_{\t,\s} \[ \mathbb{E}_a(-1)^{\tau(\nu(\t);a)+\tau(\nu(\s);a)} \right. \\ 
&\hspace{3cm} - \left. \mathbb{E}_a\mathbb{E}_b(-1)^{\tau(\nu(\t);a)+\tau(\nu(\s);b)}\] \nonumber \\
& = \sum_{\u} \mathbb{E}_a(-1)^{\tau(\nu(\u);a)} - \sum_{\t,\s} \mathbb{E}_a\mathbb{E}_b(-1)^{\tau(\nu(\t);a)+\tau(\nu(\s);b)}. \nonumber
\end{align}
The second line of (\ref{eq:var_av_6}) allows us to get the following expression
\begin{align}
\text{S}_2 &= \(\sum_{\nu(\u) = \bm{0}} + \sum_{\nu(\u) \neq \bm{0}}\) \mathbb{E}_a(-1)^{\tau(\nu(\u);a)} \nonumber \\
&\hspace{3cm} - \sum_{\t,\s} \mathbb{E}_a\mathbb{E}_b(-1)^{\tau(\nu(\t);a)+\tau(\nu(\s);b)} \nonumber \\
&\hspace{2cm} = [\text{U}_0 + \text{U}_g] - \text{TS}, \label{eq:last_l_68}
\end{align}
where by a little abuse of notation we treat $\u \to \nu(\u)$ as a function sending $2r$-tuples (instead of $r$-tuples as before) into binary codewords of length $n$, but otherwise defined exactly as in the proof of Proposition \ref{lem:main_exp}.

The sum $\text{U}_0$ in the last line of (\ref{eq:last_l_68}) corresponds exactly to the even paths, over which correlations of the matrix entries do not matter (as explained in the course of calculation of II in the proof of Proposition \ref{lem:main_exp}). Hence, the reasoning from \cite{kemp2013math} applies verbatim and we get
\begin{equation}
\label{eq:u_0}
\text{U}_0 = O\(n^r\).
\end{equation}

Next, consider the expression 
\begin{align}
\text{U}_g &= \sum_{\nu(\u) \neq \bm{0}} \mathbb{E}_a(-1)^{\tau(\nu(\u);a)} \\
&\hspace{2cm} = \frac{1}{n}\sum_{a=0}^{n-1}\, \sum_{\nu(\u) \neq \bm{0}} (-1)^{\tau(\nu(\t);a)}, \nonumber
\end{align}
and note that it is exactly equal to III$'$ defined in the first line of (\ref{eq:III_1}) with $r$ replaced by $2r$. Hence, we conclude from (\ref{eq:dev_III_tag}) that
\begin{equation}
\label{eq:u_g}
\text{U}_g = O(n^r).
\end{equation}

Let us now focus on the last summand from (\ref{eq:last_l_68}),
\begin{equation}
\text{TS} = \sum_{\nu(\t) \neq \bm{0},\: \nu(\s) \neq \bm{0}} \mathbb{E}_a\mathbb{E}_b(-1)^{\tau(\nu(\t);a)+\tau(\nu(\s);b)}.
\end{equation}
Note that
\begin{equation}
\label{eq:brack_sq}
\text{TS} = \(\sum_{\nu(\t) \neq \bm{0}} \mathbb{E}_a (-1)^{\tau(\nu(\t);a)}\)^2.
\end{equation}
Exactly the same reasoning as was exploited to obtain (\ref{eq:dev_III_tag}) from (\ref{eq:III_1}) applies here and we get for the sum inside the brackets of (\ref{eq:brack_sq}),
\begin{equation}
\label{eq:sec_bound}
\sum_{\nu(\t) \neq \bm{0}} \mathbb{E}_a (-1)^{\tau(\nu(\t);a)} = O(n^{r/2}),
\end{equation}
and therefore,
\begin{equation}
\label{eq:ts}
\text{TS} = \(\sum_{\nu(\t) \neq \bm{0}} \mathbb{E}_a (-1)^{\tau(\nu(\t);a)}\)^2 = O(n^r).
\end{equation}

Overall, by plugging (\ref{eq:u_0}), (\ref{eq:u_g}) and (\ref{eq:ts}) into (\ref{eq:last_l_68}) we obtain
\begin{equation}
\text{S}_2 = O(n^r).
\end{equation}
Now impose back the conditions (\ref{eq:lin_cond}) on $\t$ and $\s$. Similarly to the proof of Proposition \ref{lem:main_exp}, this does not change the obtained bound exactly the same way as we explained above. Together with (\ref{eq:s_0}) we conclude from (\ref{eq:var_av_5}) that
\begin{equation}
\var{\beta_r(\A_n)} = \frac{O(n^r)}{2^{2r}n^{r+2}} = O\(\frac{1}{n^2}\).
\end{equation}
Now impose back conditions (\ref{eq:lin_cond}) on $\t$ and $\s$, which does not destroy the obtained bound exactly the same way as we explained in the proof of Proposition \ref{lem:main_exp}. This completes the proof.
\end{proof}

\begin{rem}
\label{rem:better_conv_var_pr}
As we have already mentioned in the proof of Proposition \ref{lem:main_exp}, the expectations over even cycles are not affected by the statistical dependencies between the matrix entries (edge weights) since all of them are raised into even powers. Therefore, the argument from Remark \ref{rem:better_conv_var} applies in the pseudo-random case as well, and we conclude that
\begin{equation}
\label{eq:better_conv_var_pr}
\var{\beta_r(\A_n)} = \Theta\(\frac{1}{n^2}\).
\end{equation}
\end{rem}

\bibliographystyle{IEEEtran}
\bibliography{ilya_bib}
\end{document}